\newif{\ifshowchanges}
\newcommand{\del}[1]{{\bf [(DELETE) #1]}}
\newcommand{\del}[1]{\relax}
\newcommand{\A}{\mathcal A}
\newcommand{\R}{\mathbb R}
\newcommand{\Z}{\mathbb Z}
\newcommand{\ZZ}{Dias}
\newcommand{\T}{\mathcal T}
\newcommand{\onen}{1,\dots,n}
\newcommand{\ioner}{{i_1\dots i_r}}
\newcommand{\eps}{\varepsilon}
\newcommand{\GO}{\Omega}
\newcommand{\Gt}{\theta}
\newcommand{\Ls}{L^\sigma}
\newcommand{\mb}{\bar{\mu}}
\newcommand{\tp}{\tilde\pi}
\newcommand{\tF}{\widetilde{F}}
\newcommand{\tZ}{\widetilde{Z}}
\newtheorem{thm}{Theorem}[section]
\newtheorem{prop}[thm]{Proposition}
\newtheorem{cor}[thm]{Corollary}
\theoremstyle{definition}
\newtheorem{ex}[thm]{Example}
\theoremstyle{definition}
\newtheorem{defn}[thm]{Definition}
\theoremstyle{remark}
\newtheorem{rem}[thm]{Remark}
\newcommand{\lk}{\operatorname{lk}}
\def\sgn{\operatorname{sign}}
\def\sminus{\smallsetminus}
\def\<{\langle}
\def\>{\rangle}
\def\A{\mathcal{A}}
\def\Dias{{\it Dias}}
\def\lkfigright{\begin{picture}(30,20)(-5,5)\put(0,0){\line(0,1){20}}
   \put(20,0){\line(0,1){20}}\put(0,10){\vector(1,0){20}}\end{picture}}
\def\lkfigleft{\begin{picture}(30,20)(-5,5)\put(0,0){\line(0,1){20}}
   \put(20,0){\line(0,1){20}}\put(20,10){\vector(-1,0){20}}\end{picture}}
\def\midfig#1#2{\begin{picture}(50,30)(-5,10)\put(0,0){\line(0,1){30}}
   \put(20,0){\line(0,1){30}}\put(40,0){\line(0,1){30}}#1#2\end{picture}}
\def\bigfig#1#2#3{\begin{picture}(70,40)(-5,15)\put(0,0){\line(0,1){40}}
   \put(20,0){\line(0,1){40}}\put(40,0){\line(0,1){40}}
   \put(60,0){\line(0,1){40}}#1#2#3\end{picture}}
\def\onetwo#1{\put(0,#10){\vector(1,0){20}}}
\def\onethree#1{\put(0,#10){\vector(1,0){40}}}
\def\twoone#1{\put(20,#10){\vector(-1,0){20}}}
\def\twothree#1{\put(20,#10){\vector(1,0){20}}}
\def\twofour#1{\put(20,#10){\vector(1,0){40}}}
\def\threeone#1{\put(40,#10){\vector(-1,0){40}}}
\def\threetwo#1{\put(40,#10){\vector(-1,0){20}}}
\def\threefour#1{\put(40,#10){\vector(1,0){20}}}
\def\fourone#1{\put(60,#10){\vector(-1,0){60}}}
\def\fourtwo#1{\put(60,#10){\vector(-1,0){40}}}
\def\fourthree#1{\put(60,#10){\vector(-1,0){20}}}
\begin{document}

\title{Diassociative algebras and Milnor's invariants for tangles}
\author{Olga Kravchenko}
\address{Universit\'e de Lyon, Universit\'e Lyon 1, ICJ, UMR 5208 CNRS, 43 blvd 11 novembre 1918, 69622 Villeurbanne CEDEX, France }
\email{okra@math.univ-lyon1.fr}
\author{Michael Polyak}
\address{Department of mathematics, Technion, Haifa 32000, Israel}
\email{polyak@math.technion.ac.il}

\begin{abstract}
We extend Milnor's $\mu$-invariants of link homotopy to ordered (classical
or virtual) tangles. Simple combinatorial formulas for $\mu$-invariants are
given in terms of counting trees in Gauss diagrams. Invariance under
Reidemeister moves corresponds to axioms of Loday's diassociative algebra.
The relation of tangles to diassociative algebras is
formulated in terms of a morphism of corresponding operads.
\end{abstract}

\thanks{The second author was partially supported by the ISF grant 1343/10}
\subjclass[2010]{57M25; 57M27; 18D50; 16S37}
\keywords{tangles, $\mu$-invariants, planar trees, dialgebras, operads}

\maketitle

\section{Introduction}
The theory of links studies embeddings of several disjoint copies of
$S^1$ into $\R^3$ and thus has to deal with a mixture of linking and
self-knotting phenomena. The theory of link-homotopy was developed
by Milnor \cite{Mi}  in order to isolate the linking phenomena from
the self-knotting ones and to study linking separately. A fundamental
set of link-homotopy invariants is given by Milnor's $\mb_{\ioner,j}$
invariants \cite{Mi} with non-repeating indices $1\le i_1,\dots i_r,j\le n$.
Roughly speaking, these describe the dependence of the $j$-th
parallel on the meridians of the $i_1,\dots,i_r$ components.
The simplest invariant $\mb_{i,j}$ is just the linking number of the
corresponding components. The next one, $\mb_{i_1i_2,j}$, detects
Borromean-type linking of the corresponding 3 components and
together with the linking numbers classify 3-component links up
to link-homotopy.

There is no semi-group structure defined on multi-component links such
as one existing for knots. Namely, connected sum, while well-defined for
knots, is not defined for links. On the level of invariants, this is manifested
by a complicated recurrent indeterminacy in the definition of the
$\mb$-invariants (reflected in the use of notation $\mb$, rather than
$\mu$). Introduction of string links in \cite{HL} remedied this situation,
since connected sum is well-defined for string links. A version of
$\mb$-invariants modified for string links is thus free of the original
indeterminacy; to stress this fact, we use the notation $\mu$ for
these invariants from now on. Milnor's invariants classify string links
up to link-homotopy (\cite{HL}).

\subsection{Brief statement of results}
Tangles generalize links, braids and string links.
We define Milnor's $\mu-$invariants for tangles with ordered components
along the lines of Milnor's original definition, that is in terms of generators
of the (reduced) fundamental group of the complement of a tangle in a
cylinder, using the Magnus expansion.

On the other hand, tangles may be encoded by Gauss diagrams
(see \cite{PV,GPV}). We follow the philosophy of \cite{PV} to
define invariants of classical or virtual tangles by counting (with
appropriate weights and signs) certain subdiagrams of a Gauss diagram.
Since subdiagrams used in computing these invariants correspond to
rooted planar binary  trees, we call the resulting invariants $Z_j$
\textit{tree invariants}.

Invariance of tangle diagrams under Reidemeister moves gives rise
to several equivalence relations among the corresponding trees.
We study these relations and find (Theorem \ref{thm:invt}) that they
could be interpreted as defining relations of a diassociative algebra.
The notion of diassociative algebra was introduced by  Loday \cite{Lo}.
A diassociative algebra is a vector space with two associative
operations -- left and right multiplications. The five defining axioms
(equation \ref{eq:dias}) of diassociative algebra describe invariance
under the third Reidemeister move.

We  explicitly write out the linear combinations of trees used in computing
invariants of degrees 2,3 and 4. In particular, tree invariants $Z_{12,3}$
and $Z_{123,4}$ are computed and subsequently shown to coincide with
the corresponding Milnor $\mu-$invariants.

Then we discuss the properties of tree invariants of (classical or
virtual) tangles. In particular, we study their dependence on orderings
and orientations of strings. Moreover, we show that these invariants
satisfy certain skein relations, reminiscent of those satisfied by the
Conway polynomial and the Kauffman bracket. The skein relations
for Milnor invariants were determined by the second author in \cite{P1}.
Similarity of skein relations of tree invariants to Milnor's invariants
allows us to show that tree invariants $Z_{\ioner,j}$ coincide with
Milnor's $\mu$-invariants $\mu_{\ioner,j}$ when $1\le j< i_1<\dots <i_r\le n$.
This also allows us to extend Milnor's $\mu$-invariants to virtual tangles.

To describe the operadic structure on tangles we introduce the notion
of a tree tangle. For tree tangles there is an appropriate operation of
grafting, which allows us to define the operad of tree tangles.
We show that there is a map from tangles to tree tangles by an operation
called \textit{capping}. We describe a morphism of operads between the
operad of tree tangles and the diassociative algebra operad $\Dias$.

The paper is organized in the following way.
In Section \ref{sec:prelim} the main objects and tools are introduced:
tangles, Milnor's $\mu$-invariants, and Gauss diagram formulas.
In Section \ref{sec:invts} we review diassociative algebras and
introduce tree invariants of tangles and prove their invariance under
Reidemeister moves.
Section  \ref{sec:properties} is devoted to the properties of the invariants
and their identification with the $\mu$-invariants.
Finally, in Section  \ref{sec:operad} we discuss the operadic structure on
tree tangles and the corresponding morphism of operads.

The authors are grateful to Paul Bressler, Fr\'ed\'eric Chapoton
and Jean-Louis Loday for stimulating discussions, and to the
French consulate in Israel for a generous travel support.

\section{Preliminaries}\label{sec:prelim}

\subsection{Tangles and string links}
Let $D^2$ be the unit disk in $xy$-plane and let $p_i$, $i=1,\dots,N$
be some prescribed points in the interior of $D^2$. For definiteness,
we can chose the disk to have the center at $(1,0)$ and the points
lying on the $x$-axis.
\begin{defn} An (ordered, oriented) {\em $(k,l)$-tangle} without closed
components in the cylinder $C=D^2\times[0,1]$ is an ordered collection of
$n=\frac12(k+l)$ disjoint oriented intervals, properly embedded in $C$
in such a way, that the endpoints of each embedded interval belong to
the set $\{p_i\}_{i=1}^k\times\{1\}\cup\{p_i\}_{i=1}^l\times\{0\}$ in 
$C$. See Figure \ref{fig:tangles}a. We will call embedded intervals the 
{\em strings} of a tangle. Tangles are considered up to an oriented 
isotopy in $C$, fixed on the boundary.
\end{defn}
We will always assume that the only singularities of (the image of) the
projection of a tangle to the $xz$-plane are transversal double points.
Such a projection, equipped with the indication of over- and underpasses
in each double point, is called a {\em tangle diagram}.
See Figure \ref{fig:tangles}b.
\begin{figure}[htb]
\centerline{\includegraphics[width=4in]{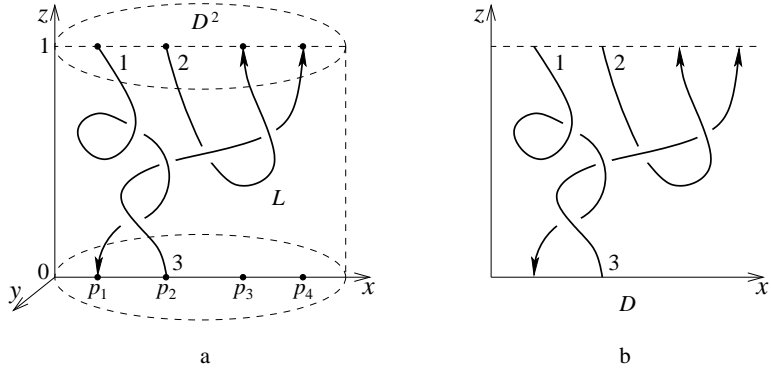}}
\caption{\label{fig:tangles} A $(4,2)$-tangle and its diagram}
\end{figure}

{\em String links } form an important class of tangles which is
comprised by $(n,n)$-tangles such that the $i$-th arc ends in
the points $p_i\times\{0,1\}$, see Figure \ref{fig:string}a.
By the {\em closure} $\widehat{L}$ of a string link $L$ we mean
the braid closure of $L$. It is an $n$-component link obtained
from $L$ by an addition of $n$ disjoint arcs in the $xz$-plane,
each of which meets $C$ only at the endpoints $p_i\times\{0,1\}$
of $L$, as illustrated in Figure \ref{fig:string}b. The linking
number $\lk$ of two strings of $L$ is their linking number in
$\widehat{L}$.
\begin{figure}[htb]
\centerline{\includegraphics[width=5in]{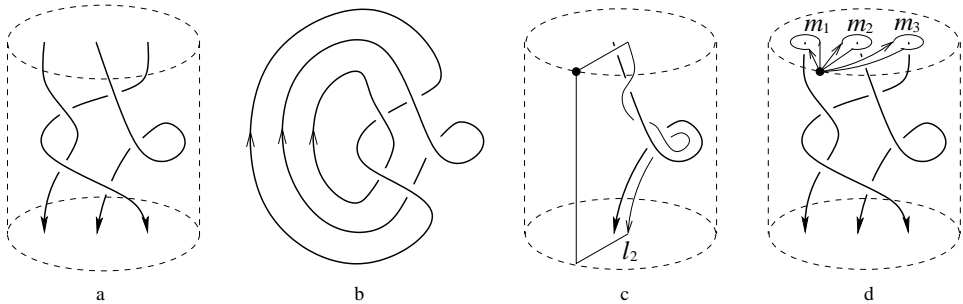}}
\caption{\label{fig:string} A string link, its closure, and canonical meridians and parallels}
\end{figure}
Two tangles are {\em link-homotopic}, if one can be transformed
into the other by homotopy, which fails to be isotopy only in a
finite number of instants, when a (generic) self-intersection point
appears on one of the arcs.

\subsection{Milnor's $\mu$-invariants}\label{sub:mu}
Let us briefly recall the construction of Milnor's link-homotopy
$\mu$-invariants (see \cite{Mi} for details, \cite{Le} for a
modification to string links, and \cite{P1} for the case of tangles).
We will first describe the well-studied case of string links,
and then indicate modifications needed for the general
case of tangles.

Let $L=\cup_{i=1}^nL_i$ be an $n$-component string link and
consider the link group $\pi=\pi_1(C\smallsetminus L)$ with the
base point $(1,1,1)$ on the upper boundary disc $D^2\times\{1\}$.
Choose {\em canonical parallels} $l_j\in\pi$, $j=\onen$ represented
by curves going parallel to $L_j$ and then closed up by standard
non-intersecting curves on the boundary of $C$ so that $\lk(l_j,L_j)=0$;
see Figure \ref{fig:string}c.
Also, denote by $m_i\in\pi$, $i=\onen$ the {\em canonical meridians}
represented by the standard non-intersecting curves in $D^2\times\{1\}$
with $\lk(m_i,L_i)=+1$, as shown in Figure \ref{fig:string}d. If $L$
is a braid, these meridians freely generate $\pi$, with any other
meridian of $L_i$ in $\pi$ being a conjugate of $m_i$. For general
string links, similar results hold for the reduced link group $\tp$.

Given a finitely-generated group $G$, the {\em reduced group}
$\tilde{G}$ is the quotient of $G$ by relations $[g,w^{-1}gw]=1$,
for any $g, w\in G$. One can show (see \cite{HL}) that $\tp$
is generated by $m_i$, $i=\onen$ proceeding similarly to the
usual construction of Wirtinger's presentation.
Let $F$ be the free group on $n$ generators $x_1,\dots x_n$.
The map $F\to\pi$ defined by $x_i\mapsto m_i$ induces the
isomorphism $\tF\cong\tp$ of the reduced groups \cite{HL}.
We will use the same notation for the elements of $\pi$ and
their images in $\tp\cong\tF$.

Now, let $\Z[[X_1,\dots,X_n]]$ be the ring of power series in $n$
non-commuting variables $X_i$ and denote by $\tZ$ its quotient by
the two-sided ideal generated by all  monomials, in which at least
one of the generators appears more than once.
The {\em Magnus expansion} is a ring homomorphism of the
group ring $\Z F$ into $\Z[[X_1,\dots,X_n]]$, defined by
$x_i\mapsto 1+ X_i, \ x_i^{-1} \mapsto 1 - X_i + X_i^2 - \cdots$.
It induces the homomorphism $\Gt:\Z\tF\to\tZ$ of the corresponding
reduced group rings.
In particular, for the case of $\tF$ being the link group of a link $L$
there is the homomorphism of reduced group rings  $\Gt_L:\Z\tp\to\tZ$.

{\em Milnor's invariants} $\mu_{\ioner,j}(L)$ of the string link $L$
are defined as coefficients of the Magnus expansion $\Gt_L(l_j)$ of
the parallel $l_j$:
$$\Gt_L(l_j)=\sum\mu_{\ioner,j}X_{i_1}X_{i_2}\dots X_{i_r}\ .$$
In particular, if $L_j$ passes everywhere in front of the other
components, all the invariants $\mu_{\ioner,j}$ vanish. Modulo lower
degree invariants $\mu_{\ioner,j}(L)\equiv\mb_{\ioner,j}(\widehat{L})$,
where $\mb_{\ioner,j}(\widehat{L})$ are the original Milnor's link
invariants \cite{Mi}.

The above definition of invariants $\mu_{\ioner,j}(L)$ may be
adapted to ordered oriented tangles without closed components
in a straightforward way. The canonical meridian $m_i$ of $L_i$
is defined as a standard curve on the boundary of $C$, making a
small loop around the starting point of $L_i$ (with $\lk(m_i,L_i)=+1$).
A canonical parallel $l_j$ of $L_j$ is a standard closure of a pushed-off
copy of $L_j$ (with $\lk(l_j,L_j)=0$). See Figure  \ref{fig:m-l}.
The only difference with the string link case is that for general tangles
there is no well-defined canonical closure (some additional choices --
e.g. of a marked component -- are needed).
\begin{figure}[htb]
\centerline{\includegraphics[width=5in]{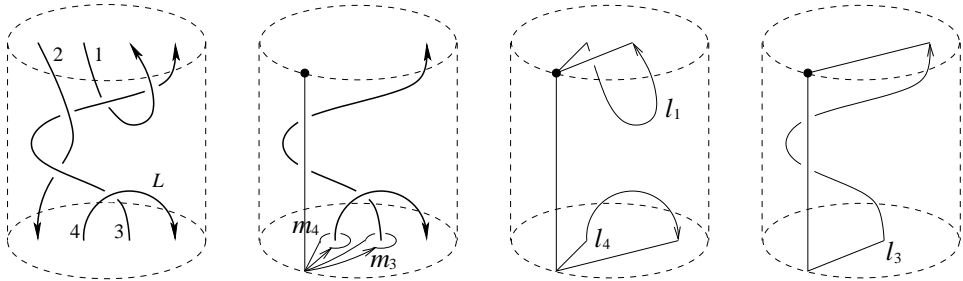}}
\caption{\label{fig:m-l} A choice of canonical meridians and parallels for a tangle}
\end{figure}
\begin{rem}\label{rem:order}
Note that the invariants $\mu_{\ioner,j}$ significantly depend on
the order of indices $i_1,i_2,\dots,i_r$ and $j$ (e.g., in general
$\mu_{i_1i_2\dots i_r,j}(L)\ne\mu_{i_2i_1\dots i_r,j}(L)$).
Under a permutation $\sigma\in S_n$, $\sigma:i\mapsto\sigma(i)$
$\mu$-invariants change in an obvious way: $\mu_{i_1i_2\dots
i_r,j}(L')=\mu_{\sigma(i_1)\sigma(i_2)\dots\sigma(i_r),\sigma(j)}(L)$,
where $L'$ is the tangle $L$ with changed ordering: $L'_i=L_{\sigma(i)}$.
\end{rem}

\subsection{Gauss diagrams}
Gauss diagrams provide a simple combinatorial way to encode links
and tangles. Consider a tangle diagram $D$ as an immersion 
$D:\sqcup_{i=1}^n I_i\to\R^2$ of $n$ disjoint copies of the
unit interval into the $xz$-plane, equipped with information 
about the overpass and the underpass in each crossing.
\begin{defn}\label{defn:Gauss}
Let $L$ be a $(k,l)$-tangle and $D$ its diagram.
The {\em Gauss diagram} $G$ corresponding to  $D$  is an ordered
collection of $n = \frac12(k+l)$ intervals $\sqcup_{i=1}^n I_i$ with 
the preimages of each crossing of $D$ connected by an arrow.
Arrows are pointing from the over-passing string to the under-passing
string and are equipped with the sign: $\pm1$ of the corresponding
crossing (its local writhe).
\end{defn}
We will usually depict the intervals in a Gauss diagram as vertical lines,
assuming that they are oriented downwards and ordered from left to right.
See Figure \ref{fig:gauss}.
\begin{figure}[htb]
\centerline{\includegraphics[width=4.4in]{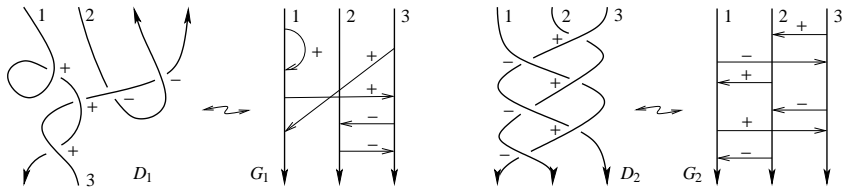}}
\caption{\label{fig:gauss} Gauss diagrams}
\end{figure}

The Gauss diagram of a tangle, $G$, encodes all the information
about the crossings, and thus all the essential information contained
in the tangle diagram $D$, in a sense that, given endpoints of each
string, $D$ can be reconstructed from $G$ uniquely up to isotopy.
Reidemeister moves of tangle diagrams may be easily translated
into the language of Gauss diagrams, see Figure \ref{fig:Reidem}.
Here fragments participating in a move may be parts of the same
string or belong to different strings, ordered in an arbitrary fashion,
and the fragments in $\GO1$ and $\GO2$ may have different
orientations. It suffices to consider only one oriented move of type
three, see \cite{CDBook,P3}.
\begin{figure}[htb]
\centerline{\includegraphics[width=4.4in]{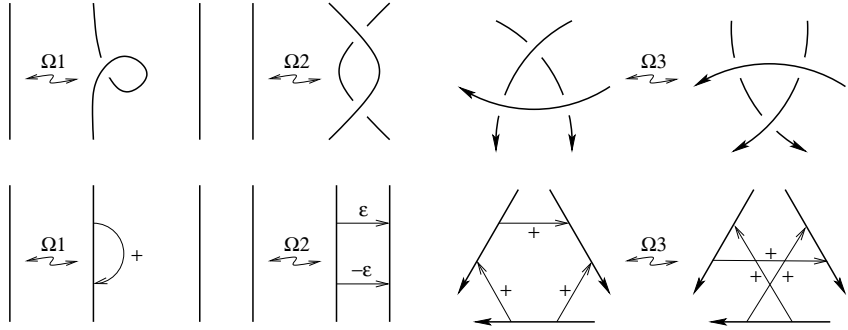}}
\caption{\label{fig:Reidem} Reidemeister moves for diagrams and Gauss diagrams}
\end{figure}

\subsection{Virtual tangles}\label{sub:virtual}
Note that not all collections of arrows connecting a set of $n$
strings can be realized as a Gauss diagram of some tangle.
Dropping this realization requirement leads to the theory of virtual
tangles, see \cite{Ka, GPV}. We may simply define a virtual
tangle as an equivalence class of virtual (that is, not necessary
realizable) Gauss diagrams modulo the Reidemeister moves of
Figure \ref{fig:Reidem}.

The fundamental group $\pi_1(C\sminus L)$ may be explicitly
deduced from a Gauss diagram of a tangle $L$. It is easy to check
that the fundamental group is invariant under the Reidemeister
moves. Thus, the construction of Section \ref{sub:mu} may be
carried out for virtual tangles as well, resulting in a definition of
$\mu$-invariants of virtual tangles.

The only new feature in the virtual case is the existence of two
tangle groups. This is related to a possibility to choose the
base point for the computation of the fundamental group
$\pi=\pi_1(C\sminus L)$ either in the front half-space $y>0$
(see Figure \ref{fig:string} and Section \ref{sub:mu}), or in the
back half-space $y<0$. While for classical tangles Wirtinger
presentations obtained using one of these base points are two
different presentations of the same group $\pi$, for virtual
tangles we get two different - the upper and the lower -
tangle groups. See \cite{GPV} for details. The passage from the
upper to the lower group corresponds to a reversal of directions
(but not of signs!) of all arrows in a Gauss diagram.
Using the lower group in the construction of Section \ref{sub:mu},
we would end up with another definition of $\mu$-invariants,
leading to a different set of ``lower $\mu$-invariants'' in the
virtual case.
We will return to this discussion in Remark \ref{rem:virtual} below.

\subsection{Gauss diagram formulas}
\begin{defn}\label{defn:arrowdiag} An {\it arrow diagram on $n$ strings}
is an ordered set of $n$ oriented intervals (strings), with several arrows
connecting pairs of distinct points on intervals, considered up to orientation
preserving diffeomorphism of the intervals.
\end{defn}
See Figure \ref{fig:arrow}. In other words, an  arrow diagram is a virtual
Gauss diagram in which we forget about realizability and signs of arrows.
\begin{figure}[htb]
\centerline{\includegraphics[width=3.8in]{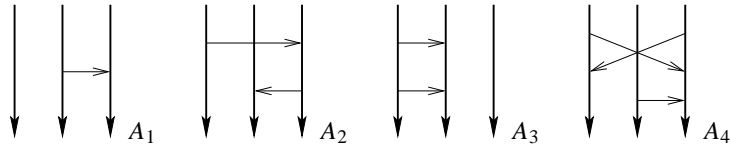}}
\caption{\label{fig:arrow} Arrow diagrams}
\end{figure}

Given an arrow diagram $A$ on $n$ strings and a Gauss diagram
$G$ with $n$ intervals, we define a map $\phi:A\to G$ as an
embedding of $A$ into $G$ which maps intervals to intervals and
arrows to arrows, preserving their orientations and ordering of intervals.
The sign of $\phi$ is defined as $\sgn(\phi)=\prod_{a\in A}\sgn(\phi(a))$.
Finally, define a pairing $\< A, G\>$ as
$$\< A, G\>=\sum_{\phi:A\to G}\sgn(\phi)$$
and if there is no embedding of $A \to G,$ then $\< A, G\>= 0.$
For example, for arrow diagrams $A_1,A_2,A_3,A_4$ of Figure
\ref{fig:arrow} and Gauss diagrams $G_1$, $G_2$ shown in
Figure \ref{fig:gauss},  we have
$\< A_1, G_1\>=\< A_2, G_1\>=\< A_4, G_1\>=-1$, $\<A_2, G_2\>=1$
and $\<A_3, G_1\>=\< A_1, G_2\>=\< A_3, G_2\>=\< A_4, G_2\>=0$.
We extend $\< \,\cdot\, , G\>$ to a vector space generated by all arrow
diagrams on $n$ strings by linearity.

For some special linear combinations $A$ of arrow diagrams the
expression $\<A,G\>$ is  preserved under the Reidemeister moves
of $G$, thus resulting in an invariant of (ordered) tangles.
See \cite{PV} and \cite{GPV} for details and a general discussion
on this type of formulas. The simplest example of such an invariant
is a well-known formula for the linking number of two components:
\begin{equation}\label{lk_eq}
\lk(L_1,L_2)=\<\lkfigleft,G\>.
\end{equation}

The right hand side is the sum $\sum_{\phi:A\to G}\sgn(\phi)$ over
all maps of $A=\lkfigleft$ to $G$. In other words, it is just the sum of
signs of all crossings of $D$, where $L_1$ passes under $L_2$.

\begin{rem}
Note that for string links one has
$$\lk(L_1,L_2)=\<\lkfigleft,G\>=\<\lkfigright,G\>=\lk(L_2,L_1).$$
For general tangles, however, these two invariants may differ.
For example, for a tangle diagram with just one crossing, where
$L_1$ passes in front of $L_2$, we have $\<\lkfigleft,G\>=0$ and
$\<\lkfigright,G\>=\pm 1$ depending on the sign of the crossing.
This is a simple illustration of a general phenomenon: symmetries,
which usually hold for classical links and string links, break down
for tangles and virtual links. We will return to this observation in
Section \ref{sec:invts}.
\end{rem}

In the next section we introduce Gauss diagram formulas for a
family of tangle invariants which includes all Milnor's link-homotopy
$\mu$-invariants.

\section{Tangle invariants by counting trees}\label{sec:invts}
In what follows, let $I=\{i_1,i_2\dots,i_r\}$, $1\le i_1<i_2<\dots< i_r\le n$
and $j\in\{1,2,\dots,n\}\smallsetminus I$.

\subsection{Tree diagrams}
\begin{defn}
A {\em tree diagram} $A$ with leaves on strings numbered by $I$
and a trunk on $j$-th string is an arrow diagram which satisfies the
following conditions:
\begin{itemize}
\item An arrowtail and an arrowhead of an arrow belong to different
strings;
\item There is exactly one arrow with an arrowtail on $i$-th string,
if $i\in I$, and no such arrows if $i\notin I$;
\item All arrows have arrowheads on $I\cup\{j\}$ strings;
\item All arrowheads precede the (unique) arrowtail for each $i \in I$,
as we follow the $i$-th strand string in the sense of its orientation.
\end{itemize}
\end{defn}

Note that the total number of arrows in a tree diagram is
$r=|I|$; we will call this number the {\em degree} of $A$.
Our choice of the term tree diagram is explained by the following.
Consider $A$ as a graph (with vertices being heads and tails
of arrows and beginning and ending points of the strings).
Removing all $k$-strings where  $k \notin I\cup\{j\}$, and cutting off
the part of each of the remaining strings after the corresponding
arrowtail, we obtain a tree $T_A$ with $r+1$ leaves on the
beginning of each $i$-string with $i \in I\cup\{j\}$ and the root in
the endpoint of $j$-th string. We will also say that $T_A$ is a
tree with leaves on $I$ and a trunk on $j$.
See Figure \ref{fig:tree}, where some tree diagrams with $r=2$,
$j=1$, $I=\{2,3\}$ are shown together with corresponding trees.

\begin{figure}[htb]
\centerline{\includegraphics[width=5in]{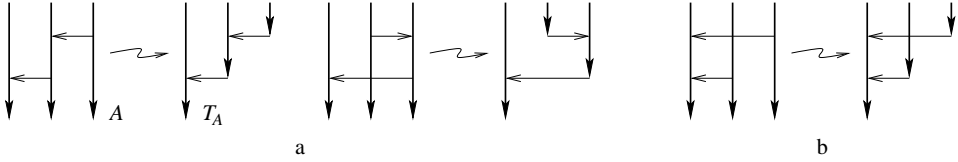}}
\caption{Planar and non-planar tree diagrams} \label{fig:tree}
\end{figure}

Note that every tree $T_A$ could be realized as a planar graph.
The tree diagram $A$ is called {\em planar}, if in its planar
realization the order of the leaves coincides with the initial
ordering $i_1<i_2<\dots<i_l<j<i_{l+1}<\dots<i_r$ of the strings
as we count the leaves starting from the root clockwise.
For example, diagrams in Figure \ref{fig:tree}a are planar, while the
one in Figure \ref{fig:tree}b is not. Let $\A_{I,j}$ denote the set of
all planar tree diagrams with leaves on $I$ and a trunk on
$j$ and let $\A_j=\displaystyle{\cup_I\A_{I,j}}$.

\subsection{Diassociative algebras and trees}
Let the sign of an arrow diagram $A$ be $\sgn(A)=(-1)^q$,
where $q$ is the number of right-pointing arrows in $A$.
Given a Gauss diagram $G$ of a tangle with the marked $j$-th
string, we define the following quantity, taking value in a free
abelian group generated by planar rooted trees\footnote{Note that
this sum is always finite, since the Gauss diagram contains a fixed
number of strings.}:
$$\sum_{A\in\A_j}\sgn(A)\<A,G\>\cdot T_A$$
While this formal sum of trees fails to be a tangle invariant, it
becomes one modulo certain equivalence relations on trees.
These relations turn out to be the axioms of a diassociative
algebra (also known as associative dialgebra):

\begin{defn}(\cite{Lo})
A diassociative algebra over a ground field $k$ is a $k$-space $V$
equipped with two $k$-linear maps
\[
\vdash:\ V\otimes V\to V\quad\mbox{and}
 \quad\dashv:\ V\otimes V\to V,
\]
called left and right products and satisfying the following five
axioms:
\begin{equation}\label{eq:dias}
\left\{ \begin{array}{cc}
(1)& (x\dashv y) \dashv z = x \dashv (y \vdash z) \\
(2) & (x\dashv y) \dashv z = x \dashv (y \dashv z)  \\
(3) & (x\vdash y) \dashv z = x \vdash (y \dashv z) \\
(4)& (x\dashv y) \vdash z = x \vdash (y \vdash z) \\
(5)& (x\vdash y) \vdash z = x \vdash (y \vdash z)
\end{array}
\right.
\end{equation}
\end{defn}


Diagrammatically, one can think about a free diassociative algebra
as follows. Depict products $a\vdash b$ and $a\dashv b$ as
elementary trees shown in Figure \ref{fig:dias}a.
Composition of these operations corresponds then to grafting of
trees, see Figure \ref{fig:dias}b,c.

\begin{figure}[htb]
\centerline{\includegraphics[width=5in]{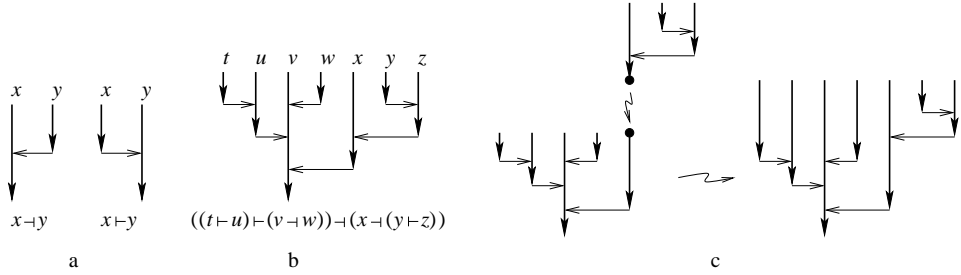}}
\caption{Diassociative operations as trees and their compositions}
\label{fig:dias}
\end{figure}

Axioms \eqref{eq:dias} correspond to relations on trees shown in
Figure \ref{fig:relsdias}.

\begin{figure}[htb]
\centerline{\includegraphics[width=5in]{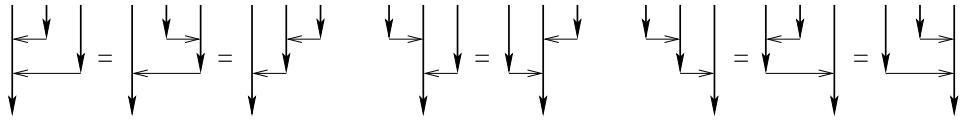}}
\caption{Diassociative algebra relations on trees}
\label{fig:relsdias}
\end{figure}

Denote by $\ZZ(n)$ the quotient of the vector space generated
by planar rooted trees with $n$ leaves by the axioms of the
diassociative algebra and let $\ZZ=\cup_n\ZZ(n)$.
The operadic composition on  $\ZZ$ corresponds to grafting of
trees, as illustrated in Figure \ref{fig:dias}c. See \cite{Lo} for details.


\subsection{Tree invariants}
\label{sub:dias}
Let $[T]$ denote the equivalence class of a planar tree $T$ in $\ZZ$, and
$G$ be the Gauss diagram of a tangle. Then $Z_j(G)\in\ZZ$ is defined as
\begin{equation}
\label{eq:Z}
Z_j(G)=\sum_{A\in\A_j}\sgn(A)\<A,G\>[T_A]
\end{equation}
$T_A$ being the tree corresponding to the tree diagram $A$.
We call $Z_j(G)$ the \textit{tree invariant} of a tangle which has
$G$ as its Gauss diagram, since it satisfies the following

\begin{thm}\label{thm:invt}
Let $L$ be an ordered (classical or virtual) tangle and let $G$ be a
Gauss diagram of $L$. Then $Z_{j}(L)=Z_j(G)$ is an
invariant of ordered tangles.
\end{thm}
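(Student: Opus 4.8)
The plan is to verify directly that $Z_j(G)$ is unchanged under each of the Reidemeister moves $\GO1$, $\GO2$, $\GO3$ on Gauss diagrams of Figure~\ref{fig:Reidem}; as noted, by \cite{CDBook,P3} it suffices to treat a single oriented version of $\GO3$. I expect $\GO1$ and $\GO2$ to be routine and to involve the quotient $\ZZ$ not at all: for these one shows that $\<A,G\>=\<A,G'\>$ already as integers, for every planar tree diagram $A\in\A_j$. Indeed, a kink is an arrow with head and tail on one and the same string, whereas every arrow of a tree diagram has its head and tail on distinct strings; hence no $\phi\colon A\to G'$ can use the kink arrow, and such maps are in sign-preserving bijection with maps $A\to G$. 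For $\GO2$ the two new arrows $a_+,a_-$ are parallel, adjacent and of opposite signs, and both have their tails on one and the same string; since a tree diagram contains at most one arrow with a tail on a given string, any $\phi$ uses at most one of $a_+,a_-$, so the contributions of those $\phi$ that use $a_+$ cancel those that use $a_-$ inside $\<A,G'\>$. In both moves the trees $T_A$ are untouched.

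The substance lies in $\GO3$, where the diassociative axioms \eqref{eq:dias} enter. Let $G$ and $G'$ be the two sides of the chosen oriented move; they agree outside a disc $\GD$ meeting three string segments $\sigma_1,\sigma_2,\sigma_3$ and carrying three arrows, one between each pair of the $\sigma_i$, whose signs agree on both sides and the order of whose two endpoints on each $\sigma_i$ is changed by a transposition. The plan is to fix all combinatorial data of $G$ outside $\GD$; a map $\phi\colon A\to G$ is then described by that outside data together with a map of a sub-forest of $A$ into $\GD$. Since $A$ is a tree diagram this sub-forest uses $0$, $1$, $2$, or $3$ of the three local arrows, and --- once one imposes the head-before-tail condition on the $I$-strings, planarity, and the condition that all heads lie on $I\cup\{j\}$-strings --- only finitely many \emph{local patterns} occur. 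Patterns that embed equally on both sides and yield the same grafted tree will be paired bijectively, sign for sign; the remaining ones, after summation, will contribute to $Z_j(G)-Z_j(G')$ a linear combination of planar trees of the shape ``a fixed planar tree grafted onto one side of an elementary three-arrow picture, minus the same with the other side.'' The claim, which Figure~\ref{fig:relsdias} is meant to display, is that each such combination is one of the five relations \eqref{eq:dias} applied inside a larger tree, hence vanishes in $\ZZ$. Summing over local patterns and over the outside data then yields $Z_j(G)=Z_j(G')$.

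The main obstacle will be the bookkeeping in this last step: performing the enumeration of local patterns while tracking the orientations of $\sigma_1,\sigma_2,\sigma_3$, which of these strings lie in $I$, in $\{j\}$, or outside $I\cup\{j\}$, and the location of each arrowtail relative to the arrowheads on its $I$-string; and then checking, family by family, that the surviving difference is exactly one of the five diassociative relations --- using, crucially, that the defining relations of $\ZZ$ form an operadic (two-sided) ideal under grafting, so that they may legitimately be applied to a subtree of $T_A$. Since every step is phrased purely in terms of Gauss diagrams and the moves of Figure~\ref{fig:Reidem}, the same argument will give invariance for virtual tangles as well.
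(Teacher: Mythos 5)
Your plan is essentially the paper's own proof: $\GO1$ and $\GO2$ are dispatched exactly as you describe (an arrow of a tree diagram never has head and tail on one string, and the two opposite-sign arrows with tails on a common string cancel in pairs), and $\GO3$ is handled by fixing everything outside the move and comparing the local two-arrow tree patterns, whose mismatches are precisely the five diassociative relations of Figure~\ref{fig:relsdias} applied inside a larger tree. The only details your deferred bookkeeping will turn up --- both noted explicitly in the paper --- are that no pattern can use all three local arrows (two of them share a tail string) and that for two of the six orderings of the three strands the surviving patterns cancel in pairs by sign on the same side of the move, rather than being matched by a relation.
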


\begin{proof}
It suffices to prove that $Z_j(G)$ is preserved under the Reidemeister
moves $\GO1$--$\GO3$ for Gauss diagrams (Figure \ref{fig:Reidem}).
Given a Gauss diagram $G$, invariance of $Z_j(G)$ under $\GO1$
and $\GO2$ follows immediately from the definition of tree diagrams.
Indeed, a new arrow appearing in $\GO1$ has both its arrowhead
and its arrowtail on the same string, so it cannot be in the image of a
tree diagram $A$.
Hence the \eqref{eq:Z} rests intact under the first move.
It is also invariant under the second move for the following reason.
Two new arrows which appear in $\GO2$ have their arrowtails on the
same string, so they cannot simultaneously belong to the image
of a tree diagram, while maps which contain one of them cancel out
in pairs due to opposite signs of the two arrows.

It remains to verify invariance under the third Reidemeister move
$\GO3$ depicted in Figure \ref{fig:Reidem}. Denote by $G$ and
$G'$ Gauss diagrams related by $\GO3$. Note that there is a
bijective correspondence between the summands of $Z_j(G)$
and those of $Z_j(G')$. Indeed, since only the relative position of
the three arrows participating in the move changes, all terms which
involve only one of these arrows do not change. No terms involve
all three arrows, since such a diagram cannot be a tree diagram.
It remains to compare terms which involve exactly two arrows.
Note that a diagram which involves two arrows can be a tree
diagram only if the fragments participating in the move belong to
three different strings. There is a number of cases, depending on
the ordering $\sigma_1,\sigma_2,\sigma_3$ of these three
strings. Using for simplicity indices $1,2,3$ for such an ordering,
we can summarize the correspondence of these terms in the table below.

\vspace{0.1in}
\centerline{\includegraphics[width=5in]{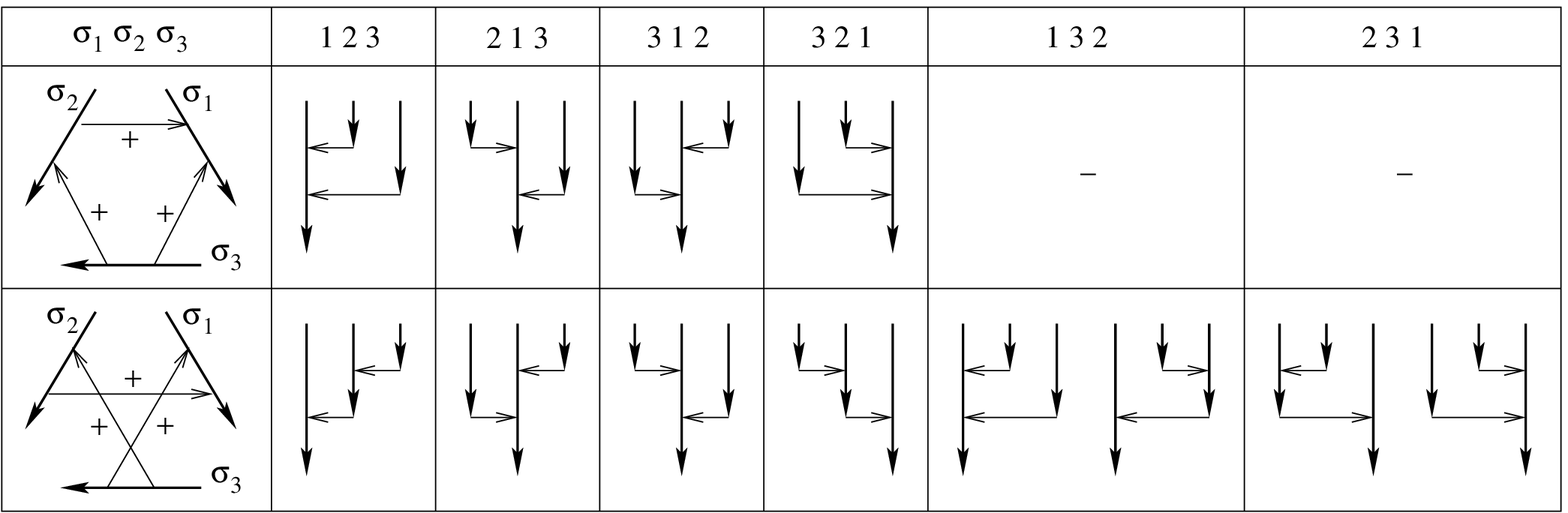}}
\vspace{0.1in}

We see that invariance is assured exactly by the diassociative
algebra relations, see Figure \ref{fig:relsdias}.
For four orderings out of six the correspondence is bijective, while
for the two last orderings, pairs of trees appearing in the bottom
row have opposite signs (due to different number of right-pointing
arrows), so their contributions to $Z_j(G')$ cancel out.
\end{proof}

\section{Properties of the tree invariants}\label{sec:properties}

The tree invariant $Z_j(L)$ takes values in the quotient $\ZZ$ of the
free abelian group generated by trees by the diassociative algebra
relations. The equivalence class $[T_A]$ of a tree $T_A$ with trunk
on $j$ depends only on the set of its leaves, so it is the same for
all arrow diagrams $A$ in the set $\A_{I,j}$ of all planar tree arrow
diagrams with leaves on $I$ and trunk on $j$.

Let $Z_{I,j}$ be the coefficient of $Z_j$
corresponding to trees with leaves on $I$, namely,
$Z_{I,j}=\sum_{A\in\A_{I,j}}\sgn(A)\<A,G\>$.
For $I=\emptyset$ we set $Z_{\emptyset,j}=1$.

\subsection{Invariants in low degrees} \label{subsec:lowdeg}
Let us start with invariants $Z_{I,j}$ for small values of $r=|I|$.

Counting tree diagrams with one arrow we get
\begin{equation}\label{eq:mu2}
Z_{2,1}(L)=\<\lkfigleft,G\>\ , \qquad Z_{1,2}(L)=-\<\lkfigright,G\>.
\end{equation}
Note that if $L$ is a string link $Z_{2,1}(L)=-Z_{1,2}(L)=\lk(L_1,L_2)$.

For diagrams with two arrows we obtain
\begin{multline}\label{eq:mu3}
Z_{23,1}(L)=\<\midfig{\threetwo2}{\twoone1}+\midfig{\twoone2}{\threeone1}
-\midfig{\twothree2}{\threeone1},G\>\ , \ Z_{13,2}(L)=
-\<\midfig{\onetwo2}{\threetwo1}+\midfig{\threetwo2}{\onetwo1},G\>\ , \\
Z_{12,3}(L)=\<\midfig{\onetwo2}{\twothree1}+\midfig{\twothree2}{\onethree1}
-\midfig{\twoone2}{\onethree1},G\>\hspace{2.1in}
\end{multline}
In particular, $Z_{13,2}(L)=Z_{1,2}(L)\cdot Z_{3,2}(L)$.
Also, $Z_{12,3}(L)=Z_{23,1}(\bar{L})$, where $\bar{L}$ is the
tangle $L$ with reflected ordering $\bar{L}_i=L_{4-i}$ of strings.


\begin{ex}\label{ex:borromean}
Consider a tangle $L$ with corresponding diagram $D_2$ depicted in
Figure \ref{fig:gauss} and let us compute $Z_{23,1}(L)$ using formula
\eqref{eq:mu3}. The corresponding Gauss diagram $G_2$ contains
three subdiagrams of the type $\midfig{\threetwo2}{\twoone1}$, two
of which cancel out, while the remaining one contributes $+1$; there
are no subdiagrams of other types appearing in \eqref{eq:mu3}.
Hence, $Z_{23,1}(L)=1$.
\end{ex}

When an orientation of a component is reversed, invariants $Z_{I,j}$
change sign and jump by a combination of lower degree invariants.
For example, denote by $L'$ the 3-string tangle obtained from $L$
by reversal of orientations of $L_1$. Then,
$$Z_{23,1}(L')=\<-\midfig{\threetwo2}{\twoone1}+\midfig{\threeone2}{\twoone1}
+\midfig{\twothree2}{\threeone1},G\>.$$
But it is easy to see that
$\<\midfig{\threeone2}{\twoone1}+\midfig{\twoone2}{\threeone1},G\>=
\<\midfig{\twoone2}{},G\>\cdot\<\midfig{\threeone1}{},G\>$,
thus we obtain
$$Z_{23,1}(L')=-Z_{23,1}(L)+Z_{2,1}(L)\cdot Z_{3,1}(L).$$

Let us write down explicitly 3-arrow diagrams with trunk on the first string:
\begin{multline}\label{eq:mu4}
Z_{234,1}(L)=\<\bigfig{\twoone3}{\threeone2}{\fourone1}
-\bigfig{\twoone2}{\threefour2}{\fourone1}
+\bigfig{\threefour3}{\twofour2}{\fourone1}
+\bigfig{\threetwo3}{\twoone2}{\fourone1}
-\bigfig{\threetwo3}{\twofour2}{\fourone1}
-\bigfig{\twothree3}{\threeone2}{\fourone1}\\
+\bigfig{\twoone2}{\fourthree2}{\threeone1}
-\bigfig{\twothree3}{\fourthree2}{\threeone1}
-\bigfig{\fourthree3}{\twothree2}{\threeone1}
+\bigfig{\twothree3}{\threefour2}{\fourone1}
+\bigfig{\threetwo3}{\fourtwo2}{\twoone1}
+\bigfig{\fourthree3}{\threetwo2}{\twoone1}
-\bigfig{\threefour3}{\fourtwo2}{\twoone1},G\>
\end{multline}
\vspace{0.1in}
For diagrams with trunk on the second or third strings we have
$Z_{134,2}(L)=Z_{1,2}(L)\cdot Z_{34,2}(L)$,
$Z_{124,3}(L)=Z_{12,3}(L)\cdot Z_{4,3}(L)$. Finally, for $j=4$ we
have $Z_{123,4}(L)=-Z_{432,1}(\bar{L})$, where $\bar{L}$ is
obtained from $L$ by the reflection $\bar{L}_i=L_{5-i}$ of the ordering.

\subsection{Elementary properties of tree invariants}
Unlike $\mu$-invariants discussed in Section \ref{sub:mu} which
had simple behavior under change of ordering (see Remark
\ref{rem:order}), tree invariants $Z_{I,j}(L)$ depend significantly on
the order of $i_1,\dots,i_r$ and $j$. Namely, if $L'_{i}=L_{\sigma(i)}$
for some $\sigma\in S_n$, $\sigma:i\to\sigma(i)$, then, in general,
$Z_{I,j}(L')$ is not directly related to $Z_{\sigma(I),\sigma(j)}(L)$.
However, in some simple cases dependence of tree invariants on
ordering and their behavior under simple changes of ordering and
reflections of orientation can be deduced directly from their definition
via planar trees:
\begin{prop}\label{prop:properties}
Let $L$ be an ordered (classical or virtual) tangle on $n$ stringsand let $I=\{i_1,i_2,\dots,i_r\}$, with $1\le i_1<i_2<\dots<i_r\le n$.
\begin{enumerate}

\item
For $1<k<r$ we have
$$Z_{I\sminus i_k,i_k}(L)=Z_{I_k^-,i_k}(L)\cdot Z_{I_k^+,i_k}(L)$$
where $I_k^-=I\cap[1,i_k-1]=\{i_1,\dots,i_{k-1}\}$ and
$I_k^+=I\cap[i_k+1,n]=\{i_{k+1},\dots,i_r\}$.

\item
Denote by $\bar{L}$ the tangle $L$ with reflected ordering:
$\bar{L}_i=L_{\bar{i}}$, $i=1,\dots,n$, where $\bar{i}=n+1-i$,
so $\bar{I}=\{\bar{i_r},\dots,\bar{i_2},\bar{i_1}\}$. Then
$$Z_{I,j}(\bar{L})=(-1)^r Z_{\bar{I},\bar{j}}(L)$$

\item
Finally, denote by $\Ls$ the tangle the tangle obtained from $L$ by
cyclic permutation $\sigma=(i_1 i_2\dots i_r )$ of strings of $L$
(that is, $\Ls_{i_k}=L_{i_{k+1}}$ for $k=1,\dots,r-1$ and
$\Ls_{i_r}=L_{i_1}$), followed by the reversal of orientation
of the last string $\Ls_{i_r}=L_{i_1}$. Then
$$Z_{I\sminus i_r,i_r}(\Ls)=Z_{I\sminus i_1,i_1}(L)$$
\end{enumerate}
\end{prop}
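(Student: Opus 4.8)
The plan is to prove the three statements by working directly with the combinatorial definition of $Z_{I,j}$ in terms of planar tree diagrams, translating each operation on tangles (splitting off a subset of strings, reflecting the ordering, cyclically permuting with an orientation reversal) into a corresponding operation on the set $\A_{I,j}$ of planar tree diagrams and on the quotient $\Dias$. For statement (1), the key observation is that a planar tree with trunk on $i_k$ and leaves on $I\sminus i_k$, where $i_k$ lies strictly between the smallest and largest index, must — by planarity and the clockwise leaf-ordering convention starting from the root — split at the root into a left factor carrying the leaves $I_k^-$ and a right factor carrying the leaves $I_k^+$. Diagrammatically this is the statement that such a tree factors through a single product node ($\vdash$ or $\dashv$) at the root. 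Correspondingly, an embedding $\phi$ of such a tree diagram into $G$ decomposes into an embedding of the left part and an independent embedding of the right part, since the two sub-tree-diagrams share only the trunk string $i_k$ and have arrowheads on disjoint sets of strings; the signs multiply. Summing over all such $\phi$ and all planar trees then gives the product formula $Z_{I_k^-,i_k}(L)\cdot Z_{I_k^+,i_k}(L)$ after checking that the class in $\Dias$ of the combined tree is exactly the product of the two classes (this is where the $\Dias$-algebra structure, and in particular the fact that $[T_A]$ depends only on the leaf set, is used).

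For statement (2), reflecting the ordering $\bar L_i=L_{n+1-i}$ induces the mirror map on planar tree diagrams: it reflects each string-picture left-to-right, which sends a planar tree with leaves on $I$ and trunk on $j$ to a planar tree with leaves on $\bar I$ and trunk on $\bar j$, and it interchanges left-pointing and right-pointing arrows. Hence each arrow contributes a factor $-1$ to $\sgn(A)$ under this reflection, and since a degree-$r$ tree diagram has exactly $r$ arrows, $\sgn(\bar A)=(-1)^r\sgn(A)$. The pairing $\langle\bar A,\bar G\rangle$ equals $\langle A,G\rangle$ up to the sign changes already accounted for (the Gauss diagram of $\bar L$ is the left-right mirror of that of $L$, with crossing signs unchanged, so embeddings correspond bijectively), and the reflected tree $T_{\bar A}$ has the same $\Dias$-class as the image of $T_A$ under the antipode-like reflection, which matches $[T]$ for the leaf set $\bar I$. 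Collecting the overall $(-1)^r$ gives $Z_{I,j}(\bar L)=(-1)^r Z_{\bar I,\bar j}(L)$.

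Statement (3) is the subtlest and I expect it to be the main obstacle. Here the trunk moves from $i_1$ (for $L$) to $i_r$ (for $\Ls$), and the point is to set up a sign-preserving bijection between planar tree diagrams with leaves on $I\sminus i_1$ and trunk on $i_1$ and those with leaves on $I\sminus i_r$ and trunk on $i_r$ that intertwines the cyclic-shift-plus-orientation-reversal on strings. The natural candidate comes from the well-known "rotation" symmetry of the free diassociative algebra: in $\Dias$, re-rooting a planar binary tree at (the image of) a leaf while reversing the orientation of that leaf strand is an isomorphism, and one checks that under the cyclic permutation $\sigma=(i_1 i_2\dots i_r)$ together with reversal of orientation of the last strand $L_{i_1}$, the condition "all arrowheads precede the unique arrowtail on each leaf strand" is preserved, because reversing $L_{i_1}$'s orientation turns its former trunk-vertex into a leaf-vertex with the arrowhead now correctly positioned, and the trunk on $i_r$ now has all arrowheads before it. The hard part is verifying that the number of right-pointing arrows changes in exactly the way prescribed by the $\Dias$-relations so that $\sgn$ and the $\Dias$-class match on the nose — i.e. that this re-rooting is precisely the operadic symmetry identified by Loday — and then confirming that the induced bijection on embeddings $\phi$ into the correspondingly modified Gauss diagram multiplies signs correctly. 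Once the bijection $\A_{I\sminus i_1,i_1}\leftrightarrow\A_{I\sminus i_r,i_r}$ is established with $\sgn$ and $[T_A]$ preserved, summing over it yields $Z_{I\sminus i_r,i_r}(\Ls)=Z_{I\sminus i_1,i_1}(L)$. I would also cross-check all three statements against the explicit low-degree formulas \eqref{eq:mu3} and \eqref{eq:mu4} and the relations stated just before this Proposition, which already record the $r=2$ and $r=3$ instances.
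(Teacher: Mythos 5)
Your treatments of parts (1) and (2) are essentially the paper's own: decomposition of a planar tree diagram with trunk on $i_k$ into the left and right half-trees (with independent embeddings and multiplying signs), and the left--right reflection of tree diagrams which flips all $r$ arrows and hence contributes $(-1)^r$. Two small corrections there: a planar tree with trunk on $i_k$ does not in general ``factor through a single product node at the root'' --- the arrowheads of the left and right half-trees may interleave along the trunk, and the product formula comes from summing over all such interleavings --- and no appeal to the $\Dias$-structure is needed anywhere in this Proposition, since $Z_{I,j}$ is an integer coefficient, not a class in $\Dias$.

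Part (3) is where you have a genuine gap, and you say so yourself (``the hard part\dots''). The strategy --- a bijection $A\mapsto A^\sigma$ between $\A_{I\sminus i_1,i_1}$ and $\A_{I\sminus i_r,i_r}$ intertwining the relabeling and the orientation reversal --- is the right one and is what the paper does, but the content of the proof is precisely the sign bookkeeping you defer, and your framing obscures it. First, no ``re-rooting at a leaf'' or operadic rotation symmetry of $\Dias$ is involved: the trunk of $Z_{I\sminus i_r,i_r}(\Ls)$ lies on the same geometric strand $L_{i_1}$, merely relabeled $i_r$ (so redrawn as the rightmost of the relevant strings) and with reversed orientation. Second, the key computation is: when the trunk is redrawn on the right, exactly the $q$ arrows with arrowheads on the trunk switch from left-pointing to right-pointing (arrows among the leaf strings keep their relative order), so $\sgn(A)=(-1)^q\sgn(A^\sigma)$; and reversing the orientation of that strand in the tangle flips the local writhe of exactly the crossings whose arrows have an endpoint on the trunk, and in the image of a tree diagram these are again exactly the $q$ arrows with arrowheads on the trunk (the trunk carries no arrowtail), so $\<A,G\>=(-1)^q\<A^\sigma,G^\sigma\>$. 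The two factors of $(-1)^q$ cancel termwise, giving $\sgn(A)\<A,G\>=\sgn(A^\sigma)\<A^\sigma,G^\sigma\>$. Without identifying these two compensating $(-1)^q$'s (and checking that the bijection on embeddings exists because $G^\sigma$ has the same crossings as $G$ with the endpoint order along the reversed strand inverted, matching the inversion in $A^\sigma$), the argument for (3) is incomplete; trying instead to match $\sgn$ and $[T_A]$ against a Dias symmetry ``on the nose'' is not the mechanism at work here.
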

\begin{proof}
Indeed, a planar tree with trunk on $j$ consists of the ``left
half-tree" with leaves on $I\cap[1,j-1]$ and the ``right half-tree"
with leaves on $I\cap[j+1,n]$. Thus the first equality follows
directly from the definition of the invariants.

Also, the reflection $i\mapsto \bar{i}$ of ordering simply reflects a
planar tree with respect to its trunk, exchanging the left and the
right half-trees and changing all right-pointing arrows into
left-pointing ones and vice versa, so the second equality follows
(since the total number of arrows is $r$).

Finally, let us compare planar tree subdiagrams in the Gauss diagram $G$
of $L$ and in the corresponding Gauss diagram $G^\sigma$ of $\Ls$.
Cyclic permutation $\sigma$ of ordering, followed by the reversal of
orientation of the trunk, establishes a bijective correspondence between
planar tree diagrams with leaves on $I\sminus i_1$ and trunk on $i_1$
and planar tree diagrams with leaves on $I\sminus i_r$ and trunk on $i_r$.
Given a diagram $A\in\A_{i_1}$, we can obtain the corresponding diagram
$A^\sigma \in\A_{i_r}$ in two steps: (1) redraw the trunk $i_1$ of $A$ on
the right of all strings, with an upwards orientation; (2) reverse the orientation
of the trunk so that it is directed downwards. See Figure \ref{fig:shift}.
\begin{figure}[htb]
\centerline{\includegraphics[width=4.6in]{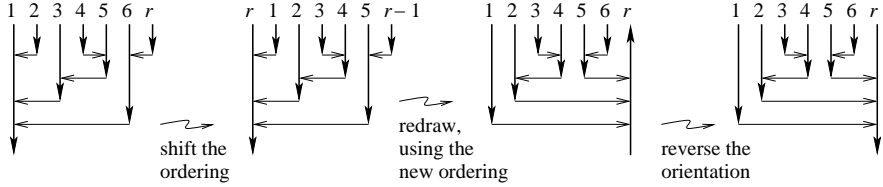}}
\caption{Reordering strings and reversing the orientation of the trunk}
\label{fig:shift}
\end{figure}
Signs of these diagrams are related as follows:
$\sgn(A)=(-1)^q \sgn(A^\sigma)$, where $q$ is the number of arrows
with arrowheads on the trunk (since all such arrows become
right-pointing instead of left-pointining).
Now note that when we pass from $G$ to $G^\sigma$, the reflection
of orientation of $\Ls_{i_r}$ has similar effect on the signs of arrows,
namely, the sign of each arrow in $G^\sigma$ with one end on the trunk
(and the other end on some other string) is reversed, so
$\<A,G\>=(-1)^q \<A^\sigma,G^\sigma\>$. These two factors of $(-1)^q$
cancel out to give $\sgn(A)\<A,G\>=\sgn(A^\sigma)\<A^\sigma,G^\sigma\>$
and the last statement follows.
\end{proof}

Tree invariants $Z_{I,j}(L)$ satisfy the following skein relations.
Let $L_+$, $L_-$, $L_0$ and $L_\infty$ be four tangles which differ
only in the neighborhood of a single crossing $d$, where they look as
shown in Figure \ref{fig:skein}. In other words, $L_+$ has a
positive crossing, $L_-$ has a negative crossing, $L_0$ is obtained
from $L_\pm$ by smoothing, and $L_\infty$ is obtained from $L_\pm$
by the reflection of orientation of $L_{i_k}$, followed by
smoothing. Orders of strings of $L_\pm$, $L_0$ and $L_\infty$
coincide in the beginning of each string. See Figures
\ref{fig:skein} and \ref{fig:example}. We will call $L_\pm$, $L_0$
and $L_\infty$ a {\em skein quadruple}.
\begin{figure}[htb]
\centerline{\includegraphics[width=3.6in]{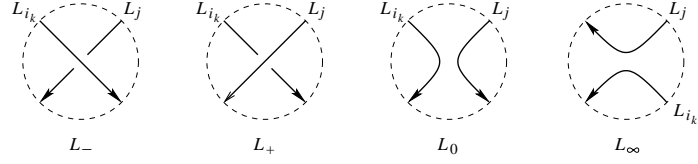}}
\caption{Skein quadruple of tangles}
\label{fig:skein}
\end{figure}
\begin{thm}\label{thm:skein}
Let $j<i_1<i_2<\dots <i_r$ and $1\le k\le r$.
Let $L_+$, $L_-$, $L_0$ and $L_\infty$ be a skein quadruple of
tangles on $n$ strings which differ only in the neighborhood of a
single crossing $d$ of $j$-th and $i_k$-th components, see Figure
\ref{fig:skein}.
For $m=1,\dots,k$ denote $I_m^-=\{i_1,\dots,i_{m-1}\}$,
$I_m^+=I\sminus I_m^-\sminus i_k=\{i_m,\dots,i_{k-1},i_{k+1},\dots,i_r\}$.
Then
\begin{equation}\label{eq:skein_mu}
Z_{I,j}(L_+)-Z_{I,j}(L_-)=Z_{I_k^-,j}(L_\infty)\cdot
Z_{I_k^+,i_k}(L_0) \ ;
\end{equation}
\begin{equation}\label{eq:skein}
Z_{I,j}(L_+)-Z_{I,j}(L_-)=\sum_{m=1}^{k}Z_{I_m^-,j}(L_\pm)\cdot
Z_{I_m^+,i_k}(L_0) \ .
\end{equation}
Here we used the notation $Z_{I_m^-,j}(L_\pm)$ to stress that
$Z_{I_m^-,j}(L_+)=Z_{I_m^-,j}(L_-)$.
\end{thm}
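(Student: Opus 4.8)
The plan is to prove the two skein formulas \eqref{eq:skein_mu} and \eqref{eq:skein} together by a direct analysis of tree diagrams mapping into the Gauss diagrams of $L_\pm$, $L_0$, $L_\infty$. Write $G_\pm$, $G_0$, $G_\infty$ for these Gauss diagrams; $G_+$ and $G_-$ differ only in the sign of the single arrow $a$ corresponding to the crossing $d$, which (since $d$ is a crossing of the $j$-th over/under the $i_k$-th component and $j<i_k$) is an arrow between the trunk string $j$ and the leaf string $i_k$. First I would split the sum $Z_{I,j}(G)=\sum_{A\in\A_{I,j}}\sgn(A)\<A,G\>$ over those planar tree diagrams $A$ whose image avoids $a$ and those whose image contains $a$. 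For the first class the embeddings into $G_+$ and $G_-$ are literally the same with the same signs, so those terms cancel in the difference $Z_{I,j}(L_+)-Z_{I,j}(L_-)$. Hence
\[
Z_{I,j}(L_+)-Z_{I,j}(L_-)=\sum_{A}\sgn(A)\bigl(\<A,G_+\>-\<A,G_-\>\bigr),
\]
the sum now being over planar tree diagrams $A$ which use the arrow $a$, and each such term equals $2\sgn(A)$ times the count of embeddings $\phi$ of $A$ with $\phi(\text{that arrow})=a$ (because $\sgn(a)$ in $G_+$ is $+1$ and in $G_-$ is $-1$, all other crossing signs agreeing).

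Next I would describe which arrow of $A$ can map to $a$. Since $a$ runs from the trunk $j$ (overpass) to the leaf string $i_k$ (underpass), the arrow of $A$ hitting $a$ must be the unique arrow of $A$ with arrowtail on string $i_k$, and its arrowhead lands on $j$; call it $\alpha$. Removing $\alpha$ from a planar tree diagram $A$ of degree $r$ with trunk on $j$ splits the picture into two independent planar tree sub-diagrams in the obvious way: the \emph{``above'' part} $A'$, a planar tree diagram with trunk on $j$ and leaves on some initial segment $I_m^-=\{i_1,\dots,i_{m-1}\}$ of the leaf strings that lie to the left of the (former) position of $\alpha$'s head on the $j$-string, together with whatever of $i_k$'s structure sits above the crossing; and the \emph{``below'' part} $A''$, a planar tree diagram with trunk on $i_k$ and leaves on the complementary set $I_m^+=\{i_m,\dots,i_{k-1},i_{k+1},\dots,i_r\}$ — here one uses the planarity and the ordering $j<i_1<\dots<i_r$ together with the head-before-tail condition on each leaf string to see that the two halves can be chosen independently and that the index $m$ ranges exactly over $1,\dots,k$. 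This is the combinatorial heart of the argument, and I expect it to be the main obstacle: one must check carefully that, because the crossing $d$ of Figure \ref{fig:skein} is located along the $j$- and $i_k$-strings in the prescribed position, an embedding $\phi$ of $A$ into $G_\pm$ with $\phi(\alpha)=a$ decomposes uniquely into an embedding of the ``above'' piece into $G_\pm$ (equivalently into $G_0$, since below/above the smoothing site the diagrams agree) and an embedding of the ``below'' piece into $G_0$, with signs multiplying correctly, $\sgn(A)=\sgn(A')\sgn(A'')$, after accounting for the extra left-pointing arrow $\alpha$ (which contributes trivially to $\sgn$). Matching Figure \ref{fig:example} against the low-degree formulas \eqref{eq:mu3}–\eqref{eq:mu4} is a useful sanity check here.

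Carrying this out, the $A$-sum factors as a sum over $m=1,\dots,k$ of (count of ``above'' tree diagrams of type $(I_m^-,j)$ in $G_0$) times (count of ``below'' tree diagrams of type $(I_m^+,i_k)$ in $G_0$), with the appropriate signs; but below the crossing the $j$-string of $L_\pm$ agrees with that of $L_0$, and $Z_{I_m^-,j}$ evaluated on these agreeing fragments is just $Z_{I_m^-,j}(L_\pm)$ — and since that invariant involves no arrow touching the crossing $d$ it is indeed insensitive to whether we use $L_+$ or $L_-$, justifying the notation $Z_{I_m^-,j}(L_\pm)$. This yields \eqref{eq:skein}. For \eqref{eq:skein_mu} I would observe that $\sum_{m=1}^k Z_{I_m^-,j}(L_\pm)\cdot Z_{I_m^+,i_k}(L_0)$ telescopes: by the product formula of Proposition \ref{prop:properties}(1) (splitting a planar tree at the $i_k$-leaf) and the fact that $L_\infty$ is obtained from $L_\pm$ by reversing the orientation of $L_{i_k}$ and smoothing — which merges the ``above'' contribution over all $m$ into the single factor $Z_{I_k^-,j}(L_\infty)$ while the ``below'' factor becomes $Z_{I_k^+,i_k}(L_0)$ — the whole sum collapses to the single product $Z_{I_k^-,j}(L_\infty)\cdot Z_{I_k^+,i_k}(L_0)$. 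Finally I would remark that the entire argument is purely at the level of Gauss diagrams and arrow-diagram counts, so it applies verbatim to virtual tangles, and that invariance of all the $Z$'s appearing was already established in Theorem \ref{thm:invt}, so the stated equalities between tangle invariants follow from the corresponding equalities between Gauss-diagram evaluations.
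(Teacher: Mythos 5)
Your opening reduction (all terms not using the crossing arrow cancel, so the difference counts embeddings through that arrow) is the same first step as the paper's proof, but two things then go wrong. First, a crossing change does not merely flip the sign of the arrow $a$: since Gauss-diagram arrows point from the over- to the under-strand, switching the crossing also \emph{reverses the direction} of $a$. A tree diagram in $\A_{I,j}$ can only use an arrow with tail on $i_k$ and head on $j$ (your first sentence about $a$ has this backwards, though you state it correctly later), so such an arrow exists in exactly one of $G_+$, $G_-$, and the difference $Z_{I,j}(L_+)-Z_{I,j}(L_-)$ equals the signed count of embeddings through it \emph{once}, not $2\sgn(A)$ times that count. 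As written, your argument would produce \eqref{eq:skein_mu} and \eqref{eq:skein} with a spurious factor of $2$.

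Second, and more seriously, the claimed factorization ``the $A$-sum factors as a sum over $m=1,\dots,k$'' is not a bijection, and this is exactly where the real work lies. Cutting a planar tree diagram at $a$ gives a $j$-trunk piece whose leaf set need not be an initial segment: by planarity, leaves \emph{larger} than everything in the $i_k$-subtree may attach to the $j$-string \emph{below} the crossing. Conversely, the products $Z_{I_m^-,j}(L_\pm)\cdot Z_{I_m^+,i_k}(L_0)$ contain pairs of subdiagrams in which arrows end on the $j$-string below the crossing in either factor; these do not arise from cutting a single planar tree diagram and must be shown to cancel. For instance, with $I=\{2,3\}$, $j=1$ and the crossing on strings $1$ and $i_2=3$, a single arrow from string $2$ ending on string $1$ below the crossing is counted in the $m=1$ summand inside $Z_{\{2\},3}(L_0)$ (where its head lies on the $L_0$-string labelled $3$, so it is right-pointing, sign $-1$) and again in the $m=2$ summand inside $Z_{\{2\},1}(L_\pm)$ (left-pointing, sign $+1$); \eqref{eq:skein} holds only because such cross-terms cancel. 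The paper handles this by a sign-reversing involution that transfers the lowest arrow ending on $j$ below $a$, together with its hanging subtree, from one factor to the other; your proposal contains no such argument. Similarly, your derivation of \eqref{eq:skein_mu} by ``telescoping'' \eqref{eq:skein} via Proposition~\ref{prop:properties}(1) is unsubstantiated: in the paper \eqref{eq:skein_mu} is obtained directly from the cut at $a$, with the upper part of the $i_k$-string absorbed, orientation reversed, into the trunk of $L_\infty$, and the resulting sign changes of crossings on that part are compensated exactly as in Proposition~\ref{prop:properties}(3) --- a sign bookkeeping your sketch does not address. So the overall strategy is the paper's, but the two central verifications (the cancellation of cross-terms for \eqref{eq:skein} and the orientation-reversal signs for \eqref{eq:skein_mu}) are missing, and the crossing-change step is incorrect as stated.
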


\begin{rem}
Note that for $m=1$ we have $I_1^-=\emptyset$ and 
$I_1^+=I\sminus i_k$, which corresponds to the summand 
$Z_{I\sminus i_k,i_k}(L_0)$ in the right hand side of 
\eqref{eq:skein}. Also, in the particular case $k=1$ both of the
equations \eqref{eq:skein_mu},\eqref{eq:skein} simplify to
\begin{equation}
\label{eq:keq1}
Z_{I,j}(L_+)-Z_{I,j}(L_-)=Z_{I\sminus i_1,i_1}(L_0) \qquad (k=1)
\end{equation}
Finally, for $k=r$ equation \eqref{eq:skein_mu} becomes
$$Z_{I,j}(L_+)-Z_{I,j}(L_-)=Z_{I\sminus i_r,j}(L_\infty) \qquad (k=r)$$
\end{rem}

\begin{ex}\label{ex:borromean_skein}
Consider the tangle $L=L_+$ depicted in Figure \ref{fig:example} and
let us compute $Z_{23,1}(L)$. Notice that if we switch the indicated
crossing of $L_1$ with $L_2$ to the negative one, we get the link $L_-$
with $L_3$ unlinked from $L_1$ and $L_2$, so $Z_{23,1}(L_-)=0$.
We have $i_1=2,i_2=3$ and $k=1$, thus we can use equation
\eqref{eq:keq1} and get
$$Z_{23,1}(L)=Z_{23,1}(L)-Z_{23,1}(L_-)=Z_{3,2}(L_0)=1,$$
in agreement with the calculations of Example \ref{ex:borromean}.
\end{ex}
\begin{figure}[htb]
\centerline{\includegraphics[height=1.1in]{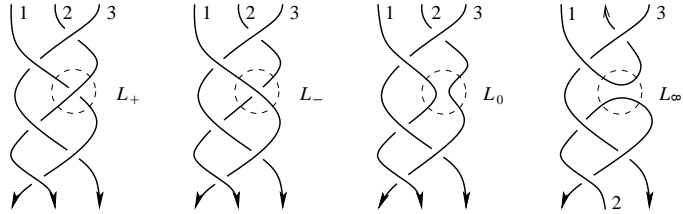}}
\caption{\label{fig:example} Computation of $Z_{23,1}$ for Borromean
rings}
\end{figure}

\begin{proof} To prove Theorem \ref{thm:skein} consider Gauss
diagrams $G_\eps$ of $L_\eps$, $\eps=\pm$ in a neighborhood
of the arrow $a_\pm$ corresponding to the crossing $d$ of $L_\pm$,
see Figure \ref{fig:skeinG_pm0infty}a.

\begin{figure}[htb]
\centerline{\includegraphics[width=5in]{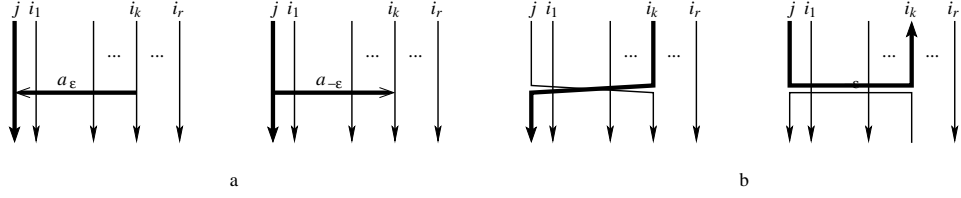}}
\caption{Gauss diagrams which appear in skein relations}
\label{fig:skeinG_pm0infty}
\end{figure}

Here if $L_j$ passes under $L_{i_k}$ in the crossing $d$ of $L_+$
$\eps=+$, and $\eps=-$ otherwise. There is an obvious bijective
correspondence between tree subdiagrams of $G_+$ and $G_-$
which do not include $a_\pm$, so these subdiagrams cancel out in
pairs in $\<A,G_+\>-\<A,G_-\>$.
Since we count only trees with the root on $j$-th string, the only
subdiagrams which contribute to $Z_{I,j}(L_+)-Z_{I,j}(L_-)$ are
subdiagrams of $G_+$ which contain $a_+$ if $\eps=+$, and
subdiagrams of $G_-$ which contain $a_-$ if $\eps=-$.
Note that in each case the arrow $a_\pm$ is counted with the
positive sign (since if $\eps=-1$, it appears in $-Z_{I,j}(L_-)$).
Without loss of generality we may assume that $\eps=+$.
Thus, $$Z_{I,j}(L_+)-Z_{I,j}(L_-)=\sum_{A\in\A_{I,j}}\<A,G_+\>_{a_+}\ ,$$
where $\<A,G\>_a$ denote the sum of all maps $\phi:A\to G$ such that
$a\in \text{Im}(\phi)$. See the left hand side of Figure \ref{fig:skeinGauss}.

\begin{figure}[htb]
\centerline{\includegraphics[height=1.5in]{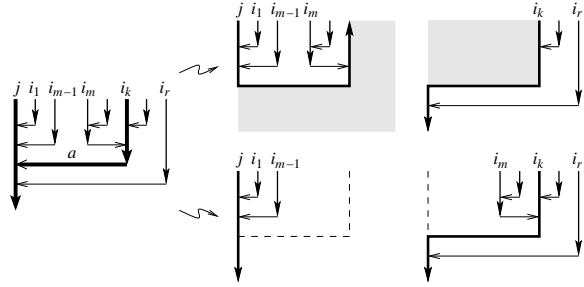}}
\caption{Skein relations on Gauss diagrams} \label{fig:skeinGauss}
\end{figure}

Interpreting $L_0$ and $L_\infty$ in terms of Gauss diagrams as
shown in Figure \ref{fig:skeinG_pm0infty}b, and using Proposition
\ref{prop:properties}, we immediately get equality \eqref{eq:skein_mu}.
See the top row of Figure \ref{fig:skeinGauss}.

Subdiagrams which appear in the equality \eqref{eq:skein} are shown
in the bottom row of Figure \ref{fig:skeinGauss}. To establish
\eqref{eq:skein}, it remains to understand why subdiagrams which
contain arrows with arrowheads on $j$ under $a_+$ cancel out in
$\sum_{m=1}^{k}Z_{I_m^-,j}(L_\pm)\cdot Z_{I_m^+,i_k}(L_0)$.
Fix $1\le m\le k$ and let $A_1\in\A_{I_m^-,j}$ and $A_2\in\A_{I_m^+,i_k}$
be two tree arrow diagrams together with maps $\phi_1:A_1\to G_+$,
$\phi_2:A_2\to G_0$. Suppose that one of the subdiagrams $G_1=\text{Im}(\phi_1(A_1))$ and $G_2=\text{Im}(\phi_2(A_2))$ of
$G_+$ contains an arrow, which ends on $j$-th string under $a$.
Denote by $a_{bot}$ the lowest such arrow in $G_1\cup G_2$ (as
we follow $j$-th string along the orientation). Without loss of generality,
we may assume that it belongs to $G_1$. See Figure \ref{fig:skein_cancel}.
\begin{figure}[htb]
\centerline{\includegraphics[width=5in]{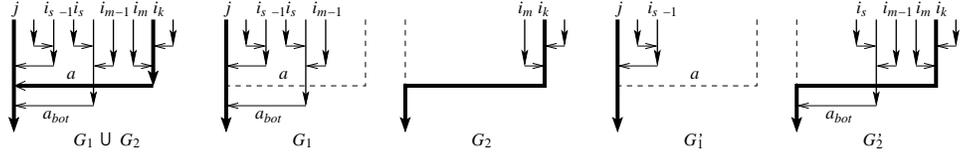}}
\caption{Cancelation of subdiagrams with arrows under $a$}
\label{fig:skein_cancel}
\end{figure}
Since $a_{bot}$ ends on the common part of the trunks of $G_+$ and $G_0$,
we may rearrange pieces of $G_1$ to get two different tree diagrams
with the same set of arrows as $G_1\cup G_2$.
Namely, removal of $a_{bot}$ from $G_1$ splits it into two connected
components $G_1'$ and $G_1''$, so that $G_1'$ contains strings
$j, i_1,\dots, i_{s-1}$ and $G_1''$ contains strings $i_s,\dots,i_{m-1}$
for some $1\le s\le m$.
Then $G_1'$ is a tree subdiagram of $G_+$ (with trunk on $j$ and
leaves on $I_{s}^-$), and $G_2':=G_1''\cup a_{bot}\cup G_2$ is a tree
subdiagram of $G_0$ (with the trunk on $i_k$ and leaves on $I_s^+$).
See Figure \ref{fig:skein_cancel}.
Their contribution to $Z_{I_s^-,j}(L_\pm)\cdot Z_{I_s^+,i_k}(L_0)$ cancels out
with that of $G_1$ and $G_2$ to $Z_{I_m^-,j}(L_\pm)\cdot Z_{I_m^+,i_k}(L_0)$.
Indeed, while $G_1'\cup G_2'$ contain the same set of arrows as $G_1\cup G_2$,
the arrow $a_{bot}$ is now right-pointing, so it is counted with additional
factor of $-1$. This completes the proof of the theorem.
\end{proof}

\subsection{Identification with Milnor's $\mu$-invariants}
\label{subsec:identific}
It turns out, that for $j < i, \ \forall i \in I$, the tree
invariant $Z_{I,j}$ coincides with a Milnor's $\mu$-invariant:
\begin{thm}\label{thm:mu}
Let $L$ be an ordered (classical or virtual) tangle on $n$ strings and let
$1\le j< i_1<i_2<\dots<i_r\le n$.  Then
$$Z_{I,j}(L)=\mu_{\ioner,j}(L)$$
\end{thm}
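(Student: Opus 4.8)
The plan is to prove Theorem \ref{thm:mu} by induction on the degree $r=|I|$, using the skein relations of Theorem \ref{thm:skein} together with the fact that both $Z_{I,j}$ and $\mu_{\ioner,j}$ satisfy the \emph{same} skein relations and share the same normalization on split tangles. First I would check the base cases: for $r=1$ the claim is immediate from \eqref{eq:mu2} and the definition of $\mu_{i,j}$ as a linking number (with the sign conventions matching because $j<i$ forces the arrow to point left, so $\sgn=+1$). For the inductive step, the key observation is that both invariants vanish on the trivial tangle (for $Z_{I,j}$ because a trivial Gauss diagram has no arrows; for $\mu_{\ioner,j}$ because $\Gt_L(l_j)=1$), and that any tangle can be connected to the trivial one by a finite sequence of crossing changes and, at each stage, the \emph{difference} across a crossing change is expressed via lower-degree invariants on $L_0$, $L_\infty$, $L_\pm$.

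The heart of the argument is therefore: (i) establish that Milnor's $\mu_{\ioner,j}$, under the hypothesis $j<i_1<\dots<i_r$, satisfies exactly the skein relation \eqref{eq:skein_mu} (equivalently \eqref{eq:skein}); this is precisely the content of the second author's paper \cite{P1}, which I would invoke, being careful that the indexing convention ($j$ smallest) is the one under which the skein relation for $\mu$ takes the clean multiplicative form with no indeterminacy. (ii) Observe that, by induction, all the invariants appearing on the right-hand sides of the skein relations — namely $Z_{I_k^-,j}(L_\infty)$, $Z_{I_k^+,i_k}(L_0)$, and the $Z_{I_m^\pm}$ terms — are of degree strictly less than $r$; moreover each of the index sets involved still satisfies the "$j$ smallest" hypothesis (for $I_k^-$ the trunk is $j$ and $j<$ everything; for $I_k^+$ the trunk is $i_k$ and we need $i_k$ smaller than $i_{k+1},\dots,i_r$, which holds). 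Hence by the inductive hypothesis these coincide with the corresponding $\mu$-invariants. (iii) Conclude that $Z_{I,j}(L)-\mu_{\ioner,j}(L)$ is unchanged under every crossing change of $L$; since it vanishes on the trivial tangle and every tangle diagram is related to the trivial one by crossing changes (after possibly reordering — but crossing changes alone suffice to unknot/unlink since we work up to the relations encoded in $\ZZ$, or more simply since both sides are honest numbers for fixed $I,j$), the difference vanishes identically.

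The main obstacle I anticipate is matching \emph{signs and conventions} between the two constructions: the Magnus-expansion definition of $\mu$ uses meridians and parallels with specific linking-number normalizations and a specific base point in the front half-space $y>0$, while $Z_{I,j}$ is built from arrow diagrams with the sign rule $\sgn(A)=(-1)^{(\#\text{right arrows})}$. Under the hypothesis $j<i_1<\dots<i_r$ all arrowheads in a planar tree diagram lie on strings to the right of the trunk $j$, so in the relevant terms arrows point consistently, which should make the sign bookkeeping tractable; nonetheless one must verify that the degree-$1$ and degree-$2$ cases (using \eqref{eq:mu2}, \eqref{eq:mu3}) agree on the nose with the low-degree Magnus coefficients $\Gt_L(l_j)$, pinning down the overall sign once and for all. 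A secondary subtlety is ensuring the skein relation for $\mu$ from \cite{P1} is stated for tangles (not just string links or links) and in a form literally matching \eqref{eq:skein_mu}; I would either cite the tangle version directly or re-derive it from the Magnus expansion by differentiating the Wirtinger-type relation at the crossing $d$, which is a short computation given the reduced-group setup already recalled in Section \ref{sub:mu}.
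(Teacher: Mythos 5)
Your overall strategy --- invoke the skein relation for $\mu$ from \cite{P1}, observe that $Z_{I,j}$ satisfies the same relation by Theorem \ref{thm:skein}, and conclude by induction on $r$ from a common normalization --- is exactly the paper's argument, and your points (i) and (ii), including the check that the index sets $I_k^-$ (trunk $j$) and $I_k^+$ (trunk $i_k$) still satisfy the ``trunk smallest'' hypothesis, are correct and are the real content. However, step (iii) has a genuine gap. The skein relation \eqref{eq:skein_mu} only controls crossing changes at crossings of the \emph{$j$-th} string with an $i_k$-th string. It says nothing about a crossing change between two leaf strings $L_{i_a},L_{i_b}$ with $a,b\ge 1$, and such a change genuinely alters both invariants (e.g.\ for $I=\{2,3\}$, $j=1$ and the Borromean-type tangle of Example \ref{ex:borromean}, switching a crossing of strings $2$ and $3$ kills $\mu_{23,1}$ and $Z_{23,1}$). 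So the claim that the difference $Z_{I,j}-\mu_{\ioner,j}$ is ``unchanged under every crossing change'' is not something your cited tools give you --- asserting it for leaf--leaf crossings is tantamount to assuming the theorem --- and consequently you cannot normalize at the trivial tangle, since reaching it generally requires exactly such uncontrolled crossing changes. (The parenthetical appeal to the relations in $\ZZ$ is beside the point: $Z_{I,j}$ is just an integer coefficient.)

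The fix is the paper's choice of normalization class: both invariants vanish on \emph{any} tangle in which $L_j$ passes in front of all other strings (for $Z_{I,j}$ because every tree diagram needs an arrowhead on the trunk $j$, and no arrow of such a Gauss diagram ends on $j$; for $\mu_{\ioner,j}$ because then $\Gt_L(l_j)=1$). Any tangle can be brought into this class by switching only those crossings where $L_j$ passes under another string; each such switch either involves some $i_k\in I$, in which case \eqref{eq:skein_mu} plus your inductive hypothesis shows $Z_{I,j}$ and $\mu_{\ioner,j}$ jump by the same amount, or involves a string outside $I\cup\{j\}$ (or a self-crossing of $L_j$), in which case neither invariant changes --- for $Z_{I,j}$ because such arrows never occur in a tree diagram with leaves on $I$, and for $\mu_{\ioner,j}$ because the invariant depends only on the strings indexed by $I\cup\{j\}$ (a point worth stating explicitly). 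With that replacement your induction closes, and the base case and sign-matching discussion you give are fine.
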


\begin{proof}
Theorem 3.1 of \cite{P1} (together with Remark \ref{rem:order})
implies that $\mu_{\ioner,j}(L)$ satisfies the same skein relation as
\eqref{eq:skein_mu}, that is
$$\mu_{I,j}(L_+)-\mu_{I,j}(L_-)=\mu_{I_k^-,j}(L_\infty)\cdot
\mu_{I_k^+,i_k}(L_0)\ .$$
Moreover, these invariants have the same normalization
$Z_{I,j}(L)=\mu_{I,j}(L)=0$ for any tangle $L$ with the $j$-th string
passing  in front of all other strings.
The skein relation and the normalization completely
determines the invariant.
\end{proof}
\begin{cor}
 Formulas \eqref{eq:mu3} and \eqref{eq:mu4} define invariants
$\mu_{23,1}$
and $\mu_{234,1}$ respectively.
\end{cor}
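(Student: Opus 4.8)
The plan is to deduce the Corollary directly from Theorem \ref{thm:mu} by a routine re-indexing argument, since the only subtlety is that formulas \eqref{eq:mu3} and \eqref{eq:mu4} are written with the trunk on the \emph{first} string, i.e.\ with $j=1$, which is exactly the case $j<i_1<\dots<i_r$ required by Theorem \ref{thm:mu}. First I would observe that in \eqref{eq:mu3} the displayed linear combination of $2$-arrow tree diagrams is precisely the sum $\sum_{A\in\A_{I,1}}\sgn(A)\langle A,G\rangle$ for $I=\{2,3\}$, so by definition the corresponding coefficient equals $Z_{23,1}(L)$; likewise \eqref{eq:mu4} is by definition $Z_{234,1}(L)$ for $I=\{2,3,4\}$. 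Then, since in both cases $j=1$ is strictly less than every index in $I$, Theorem \ref{thm:mu} applies verbatim and gives $Z_{23,1}(L)=\mu_{23,1}(L)$ and $Z_{234,1}(L)=\mu_{234,1}(L)$. Hence the combinatorial expressions on the right of \eqref{eq:mu3} and \eqref{eq:mu4} compute these Milnor invariants, which is the assertion of the Corollary.

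The only thing that really needs care is the bookkeeping: one must check that the planar tree diagrams enumerated in \eqref{eq:mu3} (respectively \eqref{eq:mu4}) are \emph{all} of the planar tree diagrams with leaves on $\{2,3\}$ (resp.\ $\{2,3,4\}$) and trunk on $1$, and that the signs $\sgn(A)=(-1)^q$ with $q$ the number of right-pointing arrows match the $\pm$ signs written in the formulas. This is a finite check: for $r=2$ there are exactly three such planar binary trees once one fixes the cyclic order of leaves around the root, matching the three terms of $Z_{23,1}$; for $r=3$ one counts the planar rooted binary trees with four leaves in the prescribed order, matching the thirteen terms of \eqref{eq:mu4}. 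I would state that this enumeration was already carried out in Section \ref{subsec:lowdeg} when the formulas were derived, so no new work is needed here.

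I expect the main (and only) obstacle to be purely expository: making sure the reader sees that ``$Z_{23,1}$ and $Z_{234,1}$ are defined by \eqref{eq:mu3} and \eqref{eq:mu4}'' is a tautology given the definition $Z_{I,j}=\sum_{A\in\A_{I,j}}\sgn(A)\langle A,G\rangle$, so that the Corollary is just the $j=1$ instance of Theorem \ref{thm:mu}. There is no analytic or combinatorial difficulty beyond this identification; the substantive content was already proved in Theorems \ref{thm:skein} and \ref{thm:mu}. Accordingly the proof can be written in two or three sentences, simply citing those results and the low-degree computations of Subsection \ref{subsec:lowdeg}.
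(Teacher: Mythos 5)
Your proposal is correct and matches the paper's (implicit) argument: the corollary is exactly the $j=1$ instance of Theorem \ref{thm:mu}, since the right-hand sides of \eqref{eq:mu3} and \eqref{eq:mu4} are by definition the sums $\sum_{A\in\A_{I,1}}\sgn(A)\langle A,G\rangle$ defining $Z_{23,1}$ and $Z_{234,1}$, already enumerated in Subsection \ref{subsec:lowdeg}. One small terminological slip: the three (resp.\ thirteen) displayed terms are planar \emph{tree diagrams} in $\A_{I,1}$, not planar binary trees (there are only two, resp.\ five, of the latter), but this does not affect the argument.
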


\begin{ex}
 If we return to the tangle $L$ of Examples \ref{ex:borromean} and
\ref{ex:borromean_skein}, shown in Figure \ref{fig:example}, we get
$\mu_{23,1}(L)=Z_{23,1}=1$, in agreement with the fact that the
closure $\widehat{L}$ of $L$ is the Borromean link.
\end{ex}

\begin{rem}\label{rem:virtual}
Note that in the proof of Theorem \ref{thm:invt} we did not use the
realizability of Gauss diagrams in our verification of invariance of
tree invariants under Reidemeister moves in  Figure \ref{fig:Reidem},
so Theorems \ref{thm:invt} and \ref{thm:mu} hold for virtual tangles
as well. Recall, however, that in the virtual case there is an alternative
definition of "lower" $\mu$-invariants of virtual tangles via the
lower tangle group, see Section \ref{sub:virtual}. To recover these
invariants using Gauss diagram formulas we simply reverse directions
of all arrows in the definition of the set of tree diagrams $\A_j$.
\end{rem}

\section{Operadic structure of the invariants}\label{sec:operad}

\subsection{Tree tangles}
\begin{defn}
A tree tangle $L$ is a $(k,1)$-tangle without closed components. The
string ending on the bottom (that is, on $D^2 \times \{0\}$) is called the
trunk of $L$.
\end{defn}
We will assume that tree tangles are oriented in such a way that
the trunk starts at the top $D^2 \times \{1\}$ and ends on the
bottom $D^2 \times \{0\}$ of $C$.
To simplify the notation, for a tree tangle $L$ with the trunk on the
$j$-th string we will denote $Z_j(L)$ by  $Z(L)$.
There is a natural way to associate to a $(k,l)$-tangle with a
distinguished string a tree tangle by pulling up all but one
of its strings. Namely, suppose  that the $j$-th string of a
$(k,l)$-tangle $L$ starts at the top and ends on the bottom.
Then $L$ can be made into a tree $(k+l-1,1)$-tangle $\widehat{L}_j$
with the trunk on $j$-th string by the operation of $j-$capping shown
in Figure \ref{fig:capping}.

\begin{figure}[htb]
\centerline{\includegraphics[width=5in]{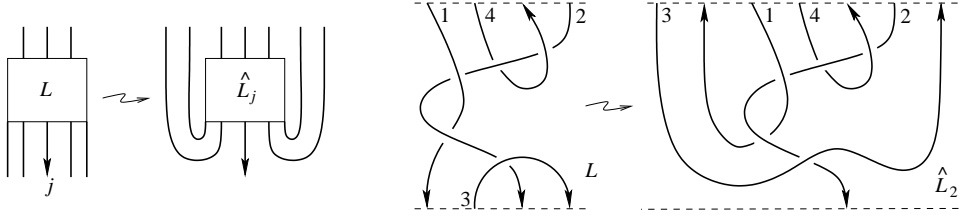}}
\caption{Capping a tangle} \label{fig:capping}
\end{figure}

Gauss diagrams of $L$ and $\widehat{L}_j$ are the same (since
crossings of $\widehat{L}_j$ are the same as in $L$), so their
tree invariants coincide: $Z_j(L) = Z(\widehat{L}_j)$.

\subsection{Operadic structure on tree tangles}
Denote by $\T(n)$ the set of tree tangles on $n$ strings. Tree
tangles form an operad $\T$. The operadic composition
\[
\T(n) \times \T(m_1) \times \cdots \times \T(m_n) \to \T(m_1+
\cdots + m_n)
\]
is defined as follows. A partial composition
$\circ_i:\T(n)\times\T(m)\to \T(n+m-1)$ corresponds to taking the
satellite of the $i$-th component of a tangle:
\begin{defn}
Let $L\in\T(n)$ and $L'\in\T(m)$ be tree tangles, and let $1\le i\le n$.
Define the satellite tangle $L\circ_i L'\in\T(n+m-1)$ as follows.
Cut out of $C=D^2\times[0,1]$ a tubular neighborhood $N(L_i)$ of the
$i$-th string $L_i$ of $L$. Glue back into
$C\smallsetminus N(L_i)$ a copy of a cylinder
$C$ which contains $L'$, identifying the boundary
$\partial D^2\times[0,1]$ with the boundary of $N(L_i)$ in
$C\smallsetminus N(L_i)$ using the zero
framing\footnote{In fact, the result does not depend on the framing
since only one component of $L'$ ends on the bottom of the
cylinder.} of $L_i$. See Figure \ref{fig:satellite}. Reorder components
of the resulting tree tangle appropriately.
\end{defn}

\begin{figure}[htb]
\centerline{\includegraphics[width=5in]{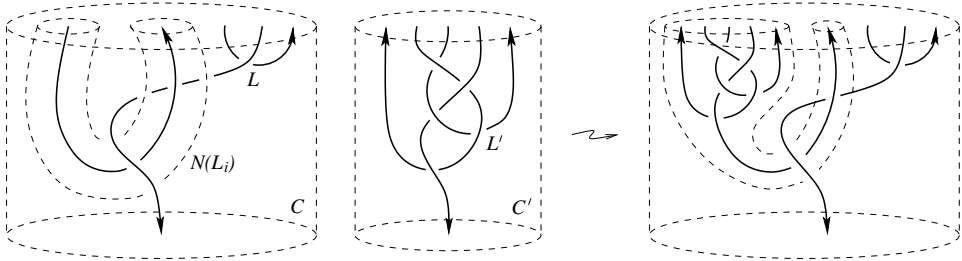}}
\caption{The satellite $L\circ_i L'$ of the $i$-th string of the tree tangle $L$} \label{fig:satellite}
\end{figure}

Now, given a tangle $L\in\T(n)$ and a collection of $n$ tree tangles
$L^1\in \T(m_1)$,\dots, $L^n\in \T(m_n)$, we define the composite
tangle $L(L^1,\dots,L^n)\in\T(m_1+ \cdots + m_n)$ by taking the
relevant satellites of all components of $L$ (and reordering the
components of the resulting tangle appropriately).

The following theorem follows directly from the definition of the
operadic structure on $\T$ and the construction of the map $Z$
from tangles to diassociative trees  given by equation \eqref{eq:Z},
Section \ref{sub:dias}.
\begin{thm}\label{thm:operad}
The map $Z:\T\to\Dias$ is a morphism of operads.
\end{thm}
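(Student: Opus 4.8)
The plan is to verify that the map $Z:\T\to\Dias$ respects the operadic structure, i.e.\ that for $L\in\T(n)$ and $L^1\in\T(m_1),\dots,L^n\in\T(m_n)$ one has $Z\bigl(L(L^1,\dots,L^n)\bigr)=Z(L)\bigl(Z(L^1),\dots,Z(L^n)\bigr)$, where on the right the composition is the operadic composition (grafting) in $\Dias$ described in Section \ref{sub:dias}. Since partial compositions $\circ_i$ generate all operadic compositions, it suffices to check compatibility with a single satellite operation $L\circ_i L'$, and to check that $Z$ intertwines $\circ_i$ on tree tangles with $\circ_i$ on $\Dias$. Because both sides are computed by summing over tree subdiagrams of Gauss diagrams, the natural strategy is to set up an explicit bijection between the tree subdiagrams contributing to $Z(L\circ_i L')$ and pairs of tree subdiagrams, one in the Gauss diagram of $L$ and one in that of $L'$, and to check that signs and the grafting of the associated planar trees match.

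First I would describe the Gauss diagram of the satellite $L\circ_i L'$. Taking the satellite replaces the $i$-th string of $L$ by a parallel copy of all $m$ strings of $L'$; crossings of $L\circ_i L'$ are of three kinds: (a) crossings of $L$ not involving $L_i$, (b) crossings of $L'$ among its own strings, which get reproduced $(\text{once})$ inside the tube, and (c) for each original crossing of $L$ on the string $L_i$, a parallel family of crossings, one with the trunk string of $L'$ (since $L'$ is a tree tangle, exactly one of its strings ends on the bottom and plays the role of the old $L_i$). So a tree subdiagram of the big Gauss diagram, having its trunk on the distinguished bottom string, decomposes canonically: the part of the tree lying "outside the tube" is a tree subdiagram $A$ in $G(L)$ with trunk on $L_i$ (or on the original trunk of $L$, in which case $L_i$ plays the role of one leaf), and the part "inside the tube" is a tree subdiagram $A'$ in $G(L')$ with trunk on the trunk string of $L'$. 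Conversely any such pair glues to a tree subdiagram upstairs. The sign $\sgn(A\cup A')=\sgn(A)\sgn(A')$ is multiplicative because right‑pointing arrows are counted independently in the two pieces, and the count $\langle A\cup A', G\rangle$ factors as the product of the two counts since the embeddings into the two regions are independent. On the level of planar trees, gluing the subdiagram inside the tube into the leaf of the outer tree at position $i$ is exactly the grafting operation $T_A\circ_i T_{A'}$, which by definition of $\Dias$ represents the operadic composition. Summing over all pairs then yields $Z(L\circ_i L')=Z(L)\circ_i Z(L')$.

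The main obstacle I anticipate is bookkeeping rather than conceptual: one must check that the "$\dashv$ vs.\ $\vdash$" decoration of the grafting node — which in $\Dias$ records on which side of a leaf the inserted subtree is attached — is correctly produced by the satellite construction, together with the relabeling and reordering of strings so that the planar order of leaves of the composed tree agrees with the planar realization used to define $\A_j$. In particular one must confirm that the convention "all arrowheads precede the unique arrowtail on each leaf string," which enters the definition of a tree diagram, is preserved under the gluing, and that after reordering the strings of the satellite the class $[T_A\cup T_{A'}]$ in $\Dias$ is well defined independently of the chosen planar representatives (this follows from Theorem \ref{thm:invt}, since $Z$ already lands in $\Dias$, i.e.\ the diassociative relations are respected on each factor). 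Once these normalization points are pinned down, associativity and equivariance of the operad morphism are immediate, since the bijection of tree subdiagrams is manifestly compatible with iterated satellites, completing the proof.
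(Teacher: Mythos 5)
Your overall plan --- reduce to the partial compositions $\circ_i$ and match tree subdiagrams of the Gauss diagram of $L\circ_i L'$ with pairs consisting of a subdiagram of $G(L)$ and a subdiagram of $G(L')$, checking signs and grafting --- is the right one, and indeed the paper itself offers nothing beyond the assertion that the theorem follows from the definitions of the operadic structure on $\T$ and of $Z$, so making this explicit is exactly what is needed. However, the step you call a ``canonical decomposition'' is false for an arbitrary representative of the satellite, and this is a genuine gap. When $N(L_i)$ is replaced by a cylinder containing $L'$, a crossing of $L_i$ with another string $L_s$ is replicated once for \emph{every} strand of the cable meeting the corresponding level of the tube --- not just once, and not only for the trunk string of $L'$ --- and a self-crossing of $L_i$ produces crossings between different strands of the cable; the zero framing may in addition force full twists of the cable. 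Hence the Gauss diagram of the satellite in general contains arrows joining non-trunk copies of strings of $L'$ to strings of $L$, and tree subdiagrams using such arrows do not split into ``inside'' and ``outside'' pieces of the kind you describe; their contributions have to cancel, so neither the factorization of $\<A\cup A',G\>$ nor the multiplicativity of signs holds termwise, and the bijection you build the proof on does not exist at this level of generality.

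The standard repair is to invoke the invariance already established in Theorem \ref{thm:invt} and compute with a normalized representative: isotope $L'$ inside its cylinder so that all strings except its trunk stay in a small neighbourhood of the top disc, all crossings of $L'$ occur there, and the trunk descends monotonically below that neighbourhood. Then in $L\circ_i L'$ only the trunk copy travels along the tube, the non-trunk copies meet no strings of $L$, self-crossings of $L_i$ yield only self-arrows (which cannot occur in a tree diagram), and on the trunk copy every arrow coming from $G(L')$ precedes every arrow inherited from a crossing of $L_i$. With this normalization your decomposition, the sign count and the identification of the glued tree with the grafting $T_A\circ_i T_{A'}$ do go through, provided you also justify a point you currently only assert: connectivity of a tree diagram forces the unique arrowtail on the trunk copy to be an inherited arrow (otherwise the cable part could never reach the trunk of the satellite), and this is precisely what makes the inside piece a tree diagram with trunk on the trunk of $L'$. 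Finally, you should state how the composition is read on the target: $Z(L)$ is a sum over all leaf sets $I$, and trees having no leaf on the $i$-th string must be composed with the degree-zero term $1$ of $Z(L')$; the paper is silent on this grading point, but your formula $Z\bigl(L(L^1,\dots,L^n)\bigr)=Z(L)\bigl(Z(L^1),\dots,Z(L^n)\bigr)$ needs it to make sense.
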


\end{document}